\documentclass[10pt, a4paper]{article}

\usepackage{amsmath, amsthm, amssymb, float}
\usepackage{graphicx,latexsym, amsmath,amsfonts}
\usepackage{amsthm}
\usepackage{hyperref}
\usepackage[nameinlink]{cleveref}
\usepackage{subcaption}
\usepackage{graphicx}
\usepackage{setspace}
\usepackage{makecell}
\usepackage{verbatim}
\usepackage{epstopdf}
\usepackage{color}
\usepackage{enumitem}
\usepackage{a4}
\usepackage[numbers]{natbib}
\allowdisplaybreaks[4]
\hypersetup{colorlinks=true}
\hypersetup{linkcolor=blue}

\numberwithin{equation}{section}
\theoremstyle{plain}

\newtheorem{Theorem}{Theorem}[section]
\newtheorem{Proposition}{Proposition}[section] 
 
\newtheorem{Lemma}{Lemma}[section]
\newtheorem{Problem}{Problem}[section]

\newtheorem{Assumption}{Assumption}[section]

\numberwithin{equation}{section}
\theoremstyle{plain}

\makeatletter \@addtoreset{equation}{section}

\begin{document}

\linespread{1.3}
\pagestyle{plain}

\title {Stochastic maximum principle for time-changed forward-backward stochastic control problem with Lévy noise}  

\author{Jingwei Chen\footnotemark[2] \footnotemark[1], Jun Ye\footnotemark[3], Feng Chen\footnotemark[4]}

\renewcommand{\thefootnote}{\fnsymbol{footnote}}
\footnotetext[1]{Corresponding author. (\texttt{chenj22@mails.tsinghua.edu.cn})}
\footnotetext[2]{Yau Mathematical Sciences Center, 
Tsinghua University,
Beijing, 100084, China.}
\footnotetext[3]{Department of Mathematical Sciences,
Tsinghua University,
Beijing, 100084, China.}
\footnotetext[4]{Department of Automation, Center for Brain-Inspired Computing Research, 
Tsinghua University,
Beijing, 100084, China.}

\date{}

\maketitle

\begin{abstract}
This paper establishes a stochastic maximum principle for optimal control problems governed by time-changed forward-backward stochastic differential equations with Lévy noise. The system incorporates a random, non-decreasing operational time (the inverse of an $\alpha$-stable subordinator) to model phenomena like trapping events and subdiffusion. Using a duality transformation and the convex variational method, we derive necessary and sufficient conditions for optimality, expressed through a novel set of adjoint equations. Finally, the theoretical results are applied to solve an explicit cash management problem under stochastic recursive utility.
\end{abstract}

\medskip
\noindent \textbf{Keywords:} Time-changed forward-backward stochastic control system, stochastic optimal control, stochastic maximum principle, Lévy noise

\medskip
\noindent \textbf{AMS Subject Clasification:} 49K15,\;60H10

\section{Introduction}
Stochastic optimal control has emerged as a cornerstone tool in modern mathematical finance, engineering system optimization, and management science, underpinning the analysis and solution of complex decision-making problems under uncertainty. 
In classical control theory, the state dynamics of a controlled system are typically modeled via a forward stochastic differential equation (SDE), whose solution captures the evolution of the system's state over time in the presence of random perturbations. 
However, in a broad class of advanced applications, the state process itself evolves as an adapted stochastic process whose dynamics must be characterized by a backward stochastic differential equation (BSDE). 
This fundamental discrepancy between the forward evolution of the state and the backward evolution of the value process naturally gives rise to the theory of forward-backward stochastic differential equations (FBSDEs), which provides a unified mathematical framework to reconcile these dual dynamical structures (see \cite{wu1998FBSDE, peng1999FBSDE, shi2010SMP, wu2013FBSDE}).

On the other hand, the introduction of random time (or time change) into stochastic differential equations has become a vibrant and rapidly evolving research frontier (see \cite{2011Kobayashi,jingwei2025RINAM,jingwei2026}). 
By replacing the standard time increment $dt$ and Brownian motion increment $dB_t$ with the increments $dE_t$ and $dB_{E_t}$ of a random, non-decreasing operational time $dE_t$, this approach embeds stochastic dynamics within a flexible, time-transformed framework. 
In financial applications, this time-changed structure can effectively model phenomena such as the stagnation of asset prices during trading halts or the intermittent arrival of market information, where the “effective” time scale of the system deviates from calendar time. In physics, it provides a powerful tool to describe the trapping effects of particles in subdiffusive processes, where the movement of particles is constrained by irregular, time-inhomogeneous environments.

The study of stochastic control for time-changed stochastic control systems represents an even more recent and underdeveloped area. 
Nane and Ni \cite{nane2021time} made a pioneering contribution by establishing the stochastic maximum principle for a stochastic control problem driven by time-changed Lévy noise, extending classical control theory to accommodate jumps and time-inogeneohomus volatility. 
Jin and Song \cite{jin2024MFG} further advanced the field by deriving the stochastic maximum principle for a class of mean field game problems with time-changed Brownian motion.
Despite these advances, the interplay between FBSDEs and time-changed systems remains largely unexplored.

In this paper, we investigate the following system of time-changed forward-backward stochastic differential equations with Lévy noise (TCFBSDEwLN)

\begin{align}
  \begin{cases}
dX_{t}^{v}=f\left(t,E_{t},X_{t}^{v},v\left(t\right)\right)dE_{t}+\sigma\left(t,E_{t},X_{t}^{v},v\left(t\right)\right)dB_{E_{t}}\\
\quad\quad+\int_{|z|<c}b\left(t,E_{t},X_{t-}^{v},v\left(t\right),z\right)\tilde{N}\left(dz,dE_{t}\right),\\
-dY_{t}^{v}=\int_{|z|<c}g\left(t,E_{t},X_{t}^{v},Y_{t}^{v},A_{t}^{v},r^{v}\left(t,z\right),v\left(t\right)\right)\Pi\left(dz\right)dE_{t}-A_{t}^{v}dB_{E_{t}}\\
\quad\quad-\int_{|z|<c}r^{v}\left(t,z\right)\tilde{N}\left(dz,dE_{t}\right),\\
X_{0}^{v}=x_{0},\\
Y_{T}^{v}=\phi\left(X_{T}^{v}\right).
\end{cases}  \label{state equation}
\end{align}
Here, $E=\left(E_{t}\right)_{t\geq0}$ is the inverse of a subordinator $D=\left(D_{t}\right)_{t\geq0}$. 
The jumps of the subordinator $D_t$ represent random waiting periods or trapping events. 
Through the compensated Poisson random measure $\tilde{N}$, the system can capture discontinuous path changes caused by sudden events (e.g., market crashes, credit defaults, operational shocks).

The rest of the paper is organized as follows. In Section 2, we formulate the main problem and provide some preliminary facts on time-changed stochastic differential equations (TCSDEs). A duality method is employed to establish the existence of a dual state equation corresponding to the original state equation. In Section 3, the convex variational method is applied, and the main results of the paper are presented, including the derivation of the stochastic maximum principle. This principle furnishes both necessary and sufficient conditions for optimality. Finally, in Section 4, the theoretical results are illustrated through an application to a cash management problem under stochastic recursive utility.

\section{Problem formulation}
Let $(\Omega,\mathcal{F},\mathbb{P})$ be a complete probability space with filtration $\{\mathcal{F}_{t}\}_{t\geq0}$ satisfying the usual conditions (i.e. right continuous and increasing while $\mathcal{F}_{0}$ contains all $\mathbb{P}$-null sets).
Let $B=\left(B_{t}\right)_{t\geq0}$ be a standard Brownian motion.
Let $D$ be an $\alpha$-stable subordinator with stability index $0<\alpha<1$. A subordinator is a one-dimensional nondecreasing Lévy process with càdlàg paths starting at 0 with Laplace transform
\begin{equation}
  \mathbb{E}[e^{-\xi D_t}] = e^{-t \psi(\xi)}, \xi>0, t\geq 0, 
\end{equation}
where the Laplace exponent $\psi:(0,\infty)\rightarrow(0,\infty)$ is $\psi(\xi) = \int_0^\infty (1 - e^{-\xi y}) \Pi(dz), \xi >0$
and the Lévy measure $\Pi$ satisfies $\int_{0}^{\infty}(z\wedge1)\Pi(dz)<\infty$.
This paper focuses on the infinite Lévy measure case, i.e. $\Pi (0,\infty)=\infty$.  
Let $E=(E_{t})_{t\geq0}$ be the inverse of $D$, i.e.
\begin{equation}
    E_{t}:=\inf\{u>0:D_{u}>t\},t\geq0. 
\end{equation}
If $D$ is a stable subordinator, then $E$ has Mittag-Leffler distributions, see \cite{meerschaert2004limit}.

Let $\mathbb{E}_{B},\mathbb{E}_{D}$ and $\mathbb{E}$ denote the expectation under the probability measures $\mathbb{P}_{B}, \mathbb{P}_{D}$ and $\mathbb{P}$, respectively. 
Suppose $B$ and $E$ are mutually independent, then the product measure satisfies $\mathbb{P}=\mathbb{P}_{B}\times\mathbb{P}_{D}$.

In the content going forward, denote $C$ as generic positive constants that may change from line to line. 

For any given $s\in[0,T]$, we introduce the following spaces.
\begin{itemize}
  \item $L^{2}(\Omega,\mathcal{F}_{T}^{s};\mathbb{R}^{n})$: the space of $\mathcal{F}_{T}^{s}$-measurable $\mathbb{R}^{n}$-valued squared integrable random variables $\xi$  such that $\mathbb{E}\left[\left|\xi\right|^{2}\right]<\infty.$
  \item $L_{\mathcal{F}}^{2}([s,T];\mathbb{R}^{n})$: the space of $\mathcal{F}_{T}^{s}$-adapted $\mathbb{R}^{n}$-valued squared integrable processes $\varphi(t)$ such that $\mathbb{E}\int_{s}^{T}\left|\varphi(t)\right|^{2}dt<\infty.$
  \item $L_{\mathcal{F}}^{\infty}([s,T];\mathbb{R}^{n})$: the space of $\mathcal{F}_{T}^{s}$-adapted $\mathbb{R}^{n}$-valued essentially bounded processes such that $\left\Vert \varphi(\cdot)\right\Vert _{\infty}:=\underset{(t,\omega)\in[s,T]\times\Omega}{\mathrm{ess\ sup}}\left|\varphi_{t}(\omega)\right|<\infty.$
  \item $L_{\mathcal{F},p}^{2}([s,T];\mathbb{R}^{n})$: the space of $\mathcal{F}_{T}^{s}$-predictable $\mathbb{R}^{n}$-valued squared integrable processes such that $\mathbb{E}\int_{s}^{T}\left|\phi(t)\right|^{2}dt<\infty.$
  \item $F_{p}^{2}([s,T];\mathbb{R}^{n})$: the space of $\mathbb{R}^{n}-valued \mathcal{F}_{T}^{s}$-predictable processes $f(\cdot,\cdot,\cdot)$ defined on $\Omega\times[0,T]\times\boldsymbol{E}$ such that $\mathbb{E}\int_{0}^{T}\int_{\boldsymbol{E}}\left|f(\cdot,t,z)\right|^{2}\Pi(dz)dE_{t}<\infty.$
\end{itemize}

Let $\mathcal{U}$ be a nonempty convex subset of $\mathbb{R}^{k}$. We define the admissible control set
\begin{equation}
  \mathcal{U}_{ad}=\left\{ v(\cdot)\in L_{F,p}^{2}\left([0,T];\mathbb{R}^{k}\right);v(t)\in\mathcal{U},a.e.t\in[0,T],\mathbb{P}-a,s.\right\}. \notag 
\end{equation}

For any given admissible control 
$v(\cdot)\in\mathcal{U}_{ad}$ and intial condition $x_{0}\in\mathbb{R}^{n}$, we consider the time-changed forward-backward stochastic control system with Lévy noise as shown in \eqref{state equation}, or equivalently in the integral form:
\begin{align}
  \begin{cases}
X_{t}^{v}=x_{0}+\int_{0}^{t}f\left(s,E_{s},X_{s}^{v},v\left(s\right)\right)dE_{s}+\int_{0}^{t}\sigma\left(s,E_{s},X_{s}^{v},v\left(s\right)\right)dB_{E_{s}}\\
\quad\quad\quad+\int_{0}^{t}\int_{|z|<c}b\left(s,E_{s},X_{s-}^{v},v\left(s\right),z\right)\tilde{N}\left(dz,dE_{s}\right),\\
Y_{t}^{v}=\phi\left(X_{T}^{v}\right)+\int_{t}^{T}\int_{|z|<c}g\left(s,E_{s},X_{s}^{v},Y_{s}^{v},A_{s}^{v},r^{v}\left(s,z\right),v\left(s\right)\right)\Pi\left(dz\right)dE_{s}\\
\quad\quad\quad-\int_{t}^{T}A_{s}^{v}dB_{E_{s}}-\int_{t}^{T}\int_{|z|<c}r^{v}\left(s,z\right)\tilde{N}\left(dz,dE_{s}\right),
\end{cases} \label{state equation, integral form}
\end{align}

Define the cost functional as follows:
\begin{equation}
  J(v(\cdot)):=\mathbb{E}\left[\int_{0}^{T}\int_{|z|<c}l\left(t,E_{t},X_{t}^{v},Y_{t}^{v},A_{t}^{v},r^{v}\left(t,z\right),v\left(t\right)\right)\Pi\left(dz\right)dE_{t}+h(X_{T}^{v})+\gamma(Y_{0}^{v})\right]. \label{cost functional}
\end{equation}
where $l$ is the running cost, $\gamma$ is the initial cost, and  $h$ is the terminal cost. 

The optimal control problem is stated as follows.

\begin{Problem}
  Find an admissible control $u(\cdot)\in \mathcal{U}_{ad}$ satisfying
  \begin{equation}
    J(u(\cdot))=\inf_{v(\cdot)\in\mathcal{U}_{ad}}J(v(\cdot))
  \end{equation}
  subject to the state equation \eqref{state equation}.
\end{Problem}

The following assumptions are provided:
\begin{Assumption}\label{assumption}
  We assume the following assumptions hold:

  (i) $f,\sigma,b$ are global Lipschitz in $(x,v)$ and $g$ is global Lipschitz in $(x,y,a,r,v)$;

  (ii) $f,\sigma,b,g,l,h$ and $\gamma$ are continuous differentiable with respect to $(x,y,a,r,v)$;

  (iii) The derivatives of $f,\sigma,g and \int_{|z|<c}\left|b_{x}(\cdot,\cdot,\cdot,z)\right|^{2}\Pi\left(dz\right),\int_{|z|<c}\left|b_{v}(\cdot,\cdot,\cdot,z)\right|^{2}\Pi\left(dz\right)$ are bounded; 

  (iv) The derivatives of $l$ are bounded by $C\left(1+|x|+|y|+|a|+|r|+|v|\right)$; The derivatives of $h$ and $\gamma$  with respect to $x$ and $y$ are bounded by $C(1+|x|)$ and $C(1+|y|)$, respectively;

  (v) $\forall x\in\mathbb{R}^{n},\phi(x)\in L^{2}\left(\Omega,\mathcal{F}_{T};\mathbb{R}^{m}\right)$; and for fixed $\omega\in\Omega,\phi(x)$ is continuously differentiable in $x$,$\phi_{x}$ is bounded;

  (vi) For all $t_{1},t_{2}\in[0,T],f\left(t_{1},t_{2},0,0\right),g\left(t_{1},t_{2},0,0,0,0,0\right)\in L_{\mathcal{\mathcal{F}}}^{2}\left([0,T];\mathbb{R}^{n}\right),\sigma\left(t_{1},t_{2},0,0\right)\in L_{\mathcal{F},p}^{2}\left([0,T];\mathbb{R}^{n}\right)$ and $b\left(t_{1},t_{2},0,0,\cdot\right)\in\mathcal{F}_{p}^{2}\left([0,T];\mathbb{R}^{n}\right)$.
\end{Assumption}

\subsection{Preliminaries}
We also need the following lemmas to deal with time-changed systems.
The following lemma comes from Lemma 3.1 in \cite{nane2017stability}.
\begin{Lemma}(Itô formula for time-changed Lévy noise)\label{Ito's formula}
  Let $D_{t}$ be an RCLL subordinator and $E_{t}$ its inverse process.
  Define a filtration $\{\mathcal{G}_{t}\}_{t\geq0}$ by $\mathcal{G}_{t}=\mathcal{F}_{E_{t}}$. 
  Let $X$ be a process defined as follows:
  \begin{align}
    X_{t}=x_{0}+\int_{0}^{t}f\left(s,E_{s},X_{s-}\right)dt+\int_{0}^{t}k\left(s,E_{s},X_{s-}\right)dE_{t}&+\int_{0}^{t}g\left(s,E_{s},X_{s-}\right)dB_{E_{t}} \notag \\
	  \quad+\int_{0}^{t}\int_{|y|<c}h\left(s,E_{s},X_{s-},z\right)\tilde{N}(dE_{t},dz),
  \end{align}
  where $f,k,g,h$ are measurable functions such that all integrals are defined. 
  Here $c$ is the maximum allowable jump size.

  Then, for all $F:\mathbb{R}_{+}\times\mathbb{R}_{+}\times\mathbb{R}\rightarrow\mathbb{R}$ in $C^{1,1,2}(\mathbb{R}_{+}\times\mathbb{R}_{+}\times\mathbb{R},\mathbb{R})$, with probability one,
  \begin{align}
  	F(t,E_{t},X_{t})-F(0,0,x_{0})
	  &=\int_{0}^{t}L_{1}F\left(s,E_{s},X_{s-}\right)ds+\int_{0}^{t}L_{2}F\left(s,E_{s},X_{s-}\right)dE_{s} \notag \\
	  &\quad+\int_{0}^{t}\int_{|y|<c}\Big[F\left(s,E_{s},X_{s-}\right)+h\left(s,E_{s},X_{s-},z\right)-F\left(s,E_{s},X_{s-}\right)\Big]\tilde{N}(dE_{s},dz) \notag \\
	  &\quad+\int_{0}^{t}F_{x}\left(s,E_{s},X_{s-}\right)g\left(s,E_{s},X_{s-}\right)dB_{E_{s}}, 
  \end{align}
  where 
  \begin{equation}
    L_{1}F(t_{1},t_{2},x)	=F_{t_{1}}(t_{1},t_{2},x)+F_{x}(t_{1},t_{2},x)f(t_{1},t_{2},x), \notag
  \end{equation}
  and
  \begin{align}
    L_{2}F(t_{1},t_{2},x)	
    &=F_{t_{2}}(t_{1},t_{2},x)+F_{x}(t_{1},t_{2},x)k(t_{1},t_{2},x)+\frac{1}{2}g^{2}(t_{1},t_{2},x)F_{xx}(t_{1},t_{2},x) \notag \\
	  &\quad+\int_{|y|<c}\Big[F\left(t_{1},t_{2},x+h\left(t_{1},t_{2},x,z\right)\right)-F\left(t_{1},t_{2},x\right)-F_{x}\left(t_{1},t_{2},x\right)h(t_{1},t_{2},x,z)\Big]\Pi(dz). \notag
  \end{align}
\end{Lemma}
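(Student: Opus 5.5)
The plan is to reduce the statement to the classical Itô formula for semimartingales with jumps, applied to the two-parameter function $(t,u,x)\mapsto F(t,u,x)$ evaluated along the vector semimartingale $(t,E_t,X_t)$. First I will record the structural facts about the driving processes. Since $D$ is strictly increasing with infinite Lévy measure, $E$ is continuous, nondecreasing and of finite variation, and $\{\mathcal G_t\}$ with $\mathcal G_t=\mathcal F_{E_t}$ is a filtration. The key input here is Kobayashi's duality and time-change theory: $B_{E_t}$ is a continuous $\mathcal G_t$-martingale with quadratic variation $[B_E,B_E]_t=E_t$, and $\tilde N(dE_t,dz)$ is a compensated random measure with compensator $\Pi(dz)\,dE_t$. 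Consequently $X$ is a $\mathcal G_t$-semimartingale. Its continuous martingale part is $\int_0^t g\,dB_{E_s}$, with $d[X^c]_s=g^2\,dE_s$. Its jumps are $\Delta X_s=h(s,E_s,X_{s-},z)$ at the atoms of $N(dE_s,dz)$.

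Next I apply the general Itô formula for a $C^{1,1,2}$ function of a semimartingale $(t,E_t,X_t)$. The components $t$ and $E_t$ are continuous and of finite variation, so no second-order terms in those variables appear, and no cross-variation terms either. This gives
\begin{align*}
F(t,E_t,X_t)-F(0,0,x_0)&=\int_0^t F_{t_1}\,ds+\int_0^t F_{t_2}\,dE_s+\int_0^t F_x\,dX_s+\tfrac12\int_0^t F_{xx}\,g^2\,dE_s\\
&\quad+\sum_{s\le t}\big[F(s,E_s,X_s)-F(s,E_s,X_{s-})-F_x(s,E_s,X_{s-})\Delta X_s\big].
\end{align*}
I then substitute $dX_s$. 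The $ds$ pieces combine into $L_1F$. The $dE_s$ pieces, from $F_{t_2}$, $F_xk$ and $\tfrac12 g^2F_{xx}$, go into $L_2F$. The $dB_{E_s}$ piece gives the stochastic integral $\int_0^t F_x\,g\,dB_{E_s}$.

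The jump terms are the delicate step. The term $F_x\,h\,\tilde N$ coming from $dX$ combines with the sum over jumps. Written as an integral against $N(dE_s,dz)$, the combination equals
\[
\int_0^t\!\!\int_{|z|<c}\big[F(s,E_s,X_{s-}+h)-F(s,E_s,X_{s-})\big]\,\tilde N(dE_s,dz)
\]
plus its compensator contribution
\[
\int_0^t\!\!\int_{|z|<c}\big[F(\cdot,X_{s-}+h)-F(\cdot,X_{s-})-F_x h\big]\,\Pi(dz)\,dE_s,
\]
which is precisely the integral term in $L_2F$. Note that the bracket in the statement contains a typo and should read $F(s,E_s,X_{s-}+h)-F(s,E_s,X_{s-})$.

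The main obstacle is justifying this splitting of $N=\tilde N+\Pi\,dE$. The integrand is only $O(|h|^2)$ near $z=0$, while $\Pi$ is infinite. I would first prove the identity for $F$ and $h$ with compact support and bounded derivatives. In that case Taylor's theorem gives integrability of the compensator term, and the $\tilde N$-integral is a genuine $L^2$ martingale by the isometry $\mathbb E\int\!\int|\cdot|^2\Pi(dz)\,dE_s$. I would then remove these restrictions by localization with stopping times $\tau_n=\inf\{t:|X_t|>n\}$ and let $n\to\infty$, obtaining the almost-sure identity.
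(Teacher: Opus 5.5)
The paper gives no proof of this lemma itself; it simply imports Lemma 3.1 of Nane and Ni. Your argument is the standard semimartingale derivation of that cited result, so it is correct and follows essentially the same route: you apply Itô's formula to the $\mathcal{G}_t$-semimartingale $(t,E_t,X_t)$ with $E$ continuous and of finite variation, use $[B_E,B_E]_t=E_t$ and the compensator $\Pi(dz)\,dE_t$, and split $N=\tilde N+\Pi(dz)\,dE_t$ in the jump sum. You are also right that the bracket in the $\tilde N$-integral is a misplaced parenthesis and should read $F(s,E_s,X_{s-}+h)-F(s,E_s,X_{s-})$.
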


A process is said to be in synchronization with a time change $E$ if $Z$ is constant on every interval $\left[E_{t-},E_{t}\right]$ almost surely.
Denote $L\left(Z,\mathcal{F}_{t}\right)$ the class of -predictable process $U$ for which a stochastic integral driven by $Z$. 
The next lemma corresponds to Lemma 2.3 and Theorem 3.1 of \cite{2011Kobayashi}.
\begin{Lemma}(First and Second Change-of-variable Formula)\label{change-of-variable formula}
  Let $Z$ be an ($\mathcal{F}_{t}$)-semimartingale which is in synchronization with the continuous finite time change $E$.

  1. If $U\in L\left(Z,\mathcal{F}_{t}\right)$, then $U_{E_{t}}\in L(Z\circ E,\mathcal{F}_{E_{t}})$.
  Moreover, with probability one, for any $t\geq0$,
  \begin{equation}
    \int_{0}^{E_{t}}U_{s}dZ_{s}=\int_{0}^{t}U_{E_{s}}dZ_{E_{s}}; \quad  \notag
  \end{equation}

  2. If $U'\in L(Z\circ E,\mathcal{F}_{E_{t}})$, then $\left(U'_{D_{t-}}\right)\in L\left(Z,\mathcal{F}_{t}\right)$. 
   Moreover, with probability one, for any $t\geq0$,
  \begin{equation}
    \int_{0}^{t}U'_{s}dZ_{E_{s}}=\int_{0}^{E_{t}}U'_{D_{t-}}dZ_{s}. \notag
  \end{equation}
\end{Lemma}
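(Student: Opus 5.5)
The plan is to follow the standard route for time-change results (Jacod's theory of time changes, as used by Kobayashi): prove the first formula for elementary integrands, extend it by a monotone-class and limit argument, and then deduce the second formula from the first using the inverse relation between $D$ and $E$.

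\textbf{Step 1 (semimartingale property of $Z\circ E$).} I would first check that $Z\circ E$ is an $(\mathcal{F}_{E_t})$-semimartingale. Since $E$ is continuous and finite, each $E_t$ is an $(\mathcal{F}_t)$-stopping time, because $\{E_t\le u\}=\{D_{u}\ge t\}$ up to the usual conditions. Hence $\mathcal{G}_t=\mathcal{F}_{E_t}$ is a filtration satisfying the usual conditions. Next, decompose $Z=Z_0+M+A$. Optional stopping shows that $M\circ E$ is a local martingale in $\mathcal{G}$: localize with $\tau_n\circ E$ where needed. The process $A\circ E$ keeps finite variation because $E$ is nondecreasing.

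\textbf{Step 2 (first formula for simple and general integrands).} Take an elementary predictable integrand $U=\xi\mathbf{1}_{(a,b]}$ with $\xi\in\mathcal{F}_a$. Then
\begin{equation}
\int_0^{E_t}U_s\,dZ_s=\xi\,(Z_{b\wedge E_t}-Z_{a\wedge E_t}).\notag
\end{equation}
The right-hand side is $\int_0^t \xi\mathbf{1}_{(a,b]}(E_s)\,dZ_{E_s}$. This holds because $s\mapsto \mathbf{1}_{(a,b]}(E_s)$ is the indicator of the $\mathcal{G}$-predictable interval $(D_{a},D_{b}]$ intersected with the region where $E$ crosses $(a,b]$. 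One uses that $Z_{E_s}$ is constant wherever the indicator is ambiguous, since $Z$ is in synchronization with $E$. Continuity of $E$ also makes $U_{E_\cdot}$ left-continuous and adapted, hence $\mathcal{G}$-predictable. Linearity and a monotone class argument then give the formula for bounded predictable $U$. For general $U\in L(Z,\mathcal{F}_t)$, truncate $U^{(n)}=U\mathbf{1}_{|U|\le n}$. By the dominated convergence theorem for stochastic integrals, applied to $Z$ and to $Z\circ E$ simultaneously, the left sides converge in ucp. This convergence also shows that $U_{E}\in L(Z\circ E,\mathcal{G})$ and identifies the limit.

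\textbf{Step 3 (second formula).} Given $U'$, set $U_s:=U'_{D_{s-}}$. Since $D_{s-}$ is a predictable time-type process, $U$ is $(\mathcal{F}_t)$-predictable. By Step 1, $\int_0^{E_t}U\,dZ=\int_0^t U'_{D_{E_s-}}\,dZ_{E_s}$. The key identity is $D_{E_s-}=s$ off the union of the flat intervals $[D_{u-},D_u]$ of $E$. On those intervals $E$ is constant, so $dZ_{E_s}$ charges none of them. Hence the integrands agree $d(Z\circ E)$-a.e., and the two integrals coincide. Integrability of $U$ against $Z$ follows by reversing the truncation argument of Step 2.

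\textbf{Expected obstacle.} I expect the main difficulty to be the measurability/predictability bookkeeping in Step 2 and the null-set argument in Step 3. Concretely, one must show that the integrands agree except on sets that carry no mass for the time-changed integrator. This is exactly where synchronization is needed, and I would handle it by working with the quadratic variation and total variation of $Z\circ E$ over the flat intervals.
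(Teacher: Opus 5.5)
The paper gives no proof of this lemma; it imports it from Kobayashi (Lemma 2.3 there is Jacod's change-of-variable theorem, and Theorem 3.1 is the second formula). Much of your plan is sound:
\begin{itemize}
\item the computation for $U=\xi\mathbf{1}_{(a,b]}$ is right, and in fact $\mathbf{1}_{(a,b]}(E_s)=\mathbf{1}_{(D_a,D_b]}(s)$ exactly;
\item the monotone-class passage to bounded predictable $U$ is fine;
\item in Step 3, $U'_{D_{E_s-}}\neq U'_s$ only on $\bigcup_u(D_{u-},D_u]$, where $Z\circ E$ is flat, so that set carries no stochastic-integral mass.
\end{itemize}
You also rightly read the integrand in part 2 as $U'_{D_{s-}}$ rather than the printed $U'_{D_{t-}}$. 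Together these prove both identities for bounded, and by localization locally bounded, integrands.

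\textbf{The gap.} It is in the two integrability assertions, $U_E\in L(Z\circ E,\mathcal{G})$ and $U'_{D_-}\in L(Z,\mathcal{F})$, which are part of the statement.
\begin{itemize}
\item In Step 2 you cannot apply dominated convergence on the $Z\circ E$ side. The dominating process would be $|U_E|$, and its membership in $L(Z\circ E,\mathcal{G})$ is exactly what is to be shown.
\item What you legitimately get is that $U^{(n)}_E\cdot(Z\circ E)=(U^{(n)}\cdot Z)\circ E$ converges ucp. But ucp convergence of truncated integrals does not imply integrability.
\item Counterexample: take $X_t=t$ and the deterministic $H$ equal to $\pm2^{k+1}$, alternating on $2m_k$ equal pieces of $[2^{-k-1},2^{-k}]$, with $m_k\to\infty$. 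Then $(H\mathbf{1}_{\{|H|\le n\}})\cdot X$ converges uniformly. Yet $\int_0^1|H|\,dt=\infty$, and for a continuous finite-variation integrator this means $H\notin L(X)$.
\item ``Reversing the truncation argument'' in Step 3 has the same defect. There it carries the whole nontrivial content of the first assertion of part 2.
\end{itemize}

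\textbf{How to close it.} For part 1 you have two options:
\begin{itemize}
\item Work with convergence in \'Emery's semimartingale topology, where $H\in L(X)$ if and only if $(H\mathbf{1}_{\{|H|\le n\}})\cdot X$ converges. You would then have to check that the time change preserves this topology.
\item Follow Jacod's proof of the first formula: use a decomposition $Z=Z_0+M+A$ with $U\in L^1_{loc}(M)$ and $\int|U|\,d|A|<\infty$. Transfer each piece pathwise, using $[M\circ E]=[M]\circ E$ and the Lebesgue--Stieltjes change of variables.
\end{itemize}

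For part 2 there is a route that avoids truncation altogether:
\begin{itemize}
\item The $D_t$ are $\mathcal{G}$-stopping times with $\mathcal{G}_{D_t}=\mathcal{F}_t$ and $(Z\circ E)\circ D=Z$.
\item $Z\circ E$ is constant on every $[D_{t-},D_t]$, i.e.\ it is synchronized with the discontinuous time change $D$.
\item Jacod's first formula for general time changes then gives directly $U'_{D_-}\in L(Z,\mathcal{F})$ and $\int_0^{D_r}U'\,d(Z\circ E)=\int_0^rU'_{D_{s-}}\,dZ_s$.
\item Set $r=E_t$ and use that $Z\circ E$ is flat on $[t,D_{E_t}]$.
\end{itemize}

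This is where synchronization genuinely enters. For continuous $E$, synchronization of $Z$ with $E$ is vacuous. So it is not what makes your null-set step work: flatness of $Z\circ E$ on the flat pieces of $E$ holds for every $Z$.
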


The next lemma is a generalization of Grönwall's inequality, see Lemma 3.2 of \cite{li2023mckean}.
\begin{Lemma}\label{Gronwall's inequality}
  Suppose $D_{t}$ is a $\alpha$-stable subordinator and $E_{t}$ is the associated inverse stable subordinator. 
  Let $\ensuremath{T>0}$ and $u,m:\Omega\times\mathbb{R}_{+}\to\mathbb{R}_{+}$ be the $\mathcal{G}_{t}$-measurable functions which are integrable with respect to $E_{t}$. 
  Let $\ensuremath{n(t)}$ be a positive, monotonic, non-decreasing function. Then, the inequality 
  \begin{equation}
    u(t)\leq n(t)+\int_{0}^{t}m(s)u(s)\,dE_{s},\quad t\geq0
  \end{equation}
  implies almost surely 
  \begin{equation}
    u(t)\leq n(t)\exp\left\{ \int_{0}^{t}m(s)dE_{s}\right\} ,\quad t\geq0.
  \end{equation}
\end{Lemma}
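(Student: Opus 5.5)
We would prove the time-changed Grönwall inequality by working pathwise: fix $\omega$ outside a null set on which $t\mapsto E_t(\omega)$ is continuous, nondecreasing and finite, and $u(\cdot,\omega),m(\cdot,\omega)$ are integrable against $dE_t(\omega)$. For such an $\omega$, $E_\cdot(\omega)$ is the distribution function of a (diffusive, atomless) Lebesgue--Stieltjes measure $\mu$ on $[0,T]$, and the statement becomes a deterministic Grönwall inequality for Stieltjes integrals against a continuous increasing integrator. Continuity of $E$ holds because $D$ is strictly increasing, which follows from the assumption $\Pi(0,\infty)=\infty$ for the $\alpha$-stable case. The measurability hypotheses only ensure that the integrals make sense; the inequality itself is obtained $\omega$ by $\omega$.

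First we reduce to $n\equiv n(t)$ constant. Fix $t$. For $s\le t$, monotonicity gives $n(s)\le n(t)$, so
\begin{equation*}
u(s)\le n(t)+\int_0^s m(r)u(r)\,dE_r,\qquad s\in[0,t].
\end{equation*}
Next we set $R(s):=\int_0^s m(r)u(r)\,dE_r$ and $M(s):=\int_0^s m(r)\,dE_r$. Both are continuous (since $\mu$ has no atoms), of bounded variation, and nondecreasing. We compute the Stieltjes differential of $e^{-M(s)}R(s)$. Because $M$ is continuous and of finite variation, the product/chain rule holds without jump corrections:
\begin{equation*}
d\big(e^{-M(s)}R(s)\big)=e^{-M(s)}m(s)\big(u(s)-R(s)\big)\,dE_s\le e^{-M(s)}m(s)\,n(t)\,dE_s=-n(t)\,d\big(e^{-M(s)}\big).
\end{equation*}
Integrating from $0$ to $t$ and using $R(0)=0$ gives $e^{-M(t)}R(t)\le n(t)(1-e^{-M(t)})$, so $R(t)\le n(t)(e^{M(t)}-1)$. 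Substituting back into $u(t)\le n(t)+R(t)$ yields $u(t)\le n(t)e^{M(t)}$. Since $t$ is arbitrary and the null set does not depend on $t$, the bound holds almost surely for all $t$.

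The main obstacle is justifying the chain rule $d(e^{-M})=-e^{-M}\,dM$ and the product rule for Stieltjes integrals. This is exactly where continuity of $E$ is essential: for a merely càdlàg integrator one only gets $\prod(1+\Delta M)$-type bounds. We would justify it using the standard result that for continuous BV functions $F,G$ one has $d(FG)=F\,dG+G\,dF$, and $d\varphi(M)=\varphi'(M)\,dM$ for $C^1$ functions $\varphi$.

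As an alternative route that avoids the differential calculus, we could iterate the inequality. Using the identity $\int_{0<r_1<\dots<r_k<t}\prod m(r_i)\,dE_{r_i}=M(t)^k/k!$, which is valid because $\mu$ is atomless, we get
\begin{equation*}
u(t)\le n(t)\sum_{k=0}^{N-1}\frac{M(t)^k}{k!}+\text{remainder}.
\end{equation*}
The remainder is bounded by $\sup u\cdot M(t)^N/N!\to0$, provided $u$ is bounded on $[0,t]$. Boundedness could be replaced by integrability via a truncation argument, so we would use the first (differential) route.
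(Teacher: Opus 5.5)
The paper does not prove this lemma. It imports it from Lemma 3.2 of \cite{li2023mckean} and uses it as a black box, so your argument is a self-contained proof where the paper has none, and it is correct. The pathwise reduction is legitimate: $\Pi(0,\infty)=\infty$ (the $\alpha$-stable subordinator has zero drift) makes $D$ strictly increasing, hence $E$ continuous. Freezing $n$ at $n(t)$ is exactly where monotonicity of $n$ enters. The integrating-factor identity $d(e^{-M}R)=e^{-M}m(u-R)\,dE\le -n(t)\,d(e^{-M})$ holds for continuous finite-variation functions and gives $R(t)\le n(t)(e^{M(t)}-1)$ off a null set that does not depend on $t$. Compared with the citation, your route shows that the statement is a deterministic Stieltjes Gr\"onwall inequality. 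Its only probabilistic input is the continuity of $E$; that is all the $\alpha$-stable hypothesis is used for, and the $\mathcal{G}_t$-measurability only makes the integrals meaningful. Your remark on c\`adl\`ag integrators correctly explains why this continuity cannot be dropped.

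You should state one assumption explicitly rather than use it silently. After fixing $\omega$ you invoke the hypothesis at every $s\le t$, so you are reading it as holding on one full-measure set simultaneously for all $t$. It is enough to have it for $dE_\cdot(\omega)$-a.e.\ $s$ and at the given $t$. This reading is genuinely needed, not cosmetic. The measure $dE_\cdot(\omega)$ is carried by the closed range $\mathcal{R}(\omega)$ of $D(\omega)$. Because $D$ has zero drift, a fixed time a.s.\ lies strictly inside a jump gap of $D$. So an inequality known only ``for each $t$, a.s.'' says nothing about $u$ on the set where $dE$ lives. Concretely, take $m\equiv1$, $K>0$ and $u(s)=n+(n+K)E_s+K\,\mathbf{1}_{\{s>0,\,s\in\mathcal{R}\}}$. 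Then $\int_0^t u\,dE=(n+K)(E_t+E_t^2/2)$, so for each fixed $t$ the hypothesis holds a.s. Yet a.s.\ $u(t)=n+(n+K)E_t$, and this exceeds $ne^{E_t}$ on the event $\{0<E_t<\delta\}$ for small $\delta>0$, which has positive probability.
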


\subsection{Duality}

We now introduce the following dual system to \eqref{state equation}:
\begin{align}
  \begin{cases}
dX_{t}^{v,*}=f\left(D_{t},t,X_{t}^{v,*},v\left(D_{t}\right)\right)dt+\sigma\left(D_{t},t,X_{t}^{v,*},v\left(D_{t}\right)\right)dB_{t}\\
\quad\quad\quad+\int_{|z|<c}b\left(D_{t},t,X_{t-}^{v,*},v\left(D_{t},t\right),z\right)\tilde{N}\left(dt,dz\right),\\
-Y_{t}^{v,*}=\int_{|z|<c}g\left(D_{t},t,X_{t}^{v,*},Y_{t}^{v,*},A_{D_{t}}^{v},r^{v}\left(D_{t},z\right),v\left(D_{t}\right)\right)\Pi\left(dz\right)dt\\
\quad\quad\quad-A_{D_{t}}^{v}dB_{t}-\int_{|z|<c}r^{v}\left(D_{t},z\right)\tilde{N}\left(dz,dt\right),\\
X_{0}^{v,*}=x_{0},\\
Y_{E_{T}}^{v,*}=\phi\left(X_{E_{T}}^{v,*}\right).
\end{cases} \label{dual state equation}
\end{align}

or equivalently in integral form:
\begin{align}
  \begin{cases}
X_{t}^{v,*}=x_{0}+\int_{0}^{E_{t}}f\left(D_{s},s,X_{s}^{v,*},v\left(D_{s}\right)\right)ds+\int_{0}^{E_{t}}\sigma\left(D_{s},s,X_{s}^{v,*},v\left(D_{s}\right)\right)dB_{s}\\
\quad\quad\quad+\int_{0}^{E_{t}}\int_{|z|<c}b\left(D_{s},s,X_{s-}^{v,*},v\left(D_{s},s\right),z\right)\tilde{N}\left(dz,ds\right),\\
Y_{t}^{v,*}=\phi\left(X_{E_{T}}^{*}\right)+\int_{t}^{E_{T}}\int_{|z|<c}g\left(D_{s},s,X_{s}^{v,*},Y_{s}^{v,*},A_{D_{s}}^{v},r^{v}\left(D_{s},z\right),v\left(D_{s}\right)\right)\Pi\left(dz\right)ds\\
\quad\quad\quad-\int_{t}^{E_{T}}A_{D_{s}}^{v}dB_{s}-\int_{t}^{E_{T}}\int_{|z|<c}r^{v}\left(D_{s},z\right)\tilde{N}\left(dz,ds\right),
\end{cases} \label{dual State equation-integral form}
\end{align}

\begin{Theorem}(Duality of the General State Equations)\label{duality of general state equations}
  Suppose there exists a strong solution $(X_{t}^{*},Y_{t}^{*},A(t),r(t,z))$ to the dual forward-backward stochastic control system with Lévy noise \eqref{dual State equation-integral form}, 
  then we have

  1. If $\left\{ X_{t}^{*},Y_{t}^{*},A_{D_{t}},r\left(D_{t},z\right)\right\} _{t\geq0}$ satisfies \eqref{dual State equation-integral form},
  then $\left\{ X_{E_{t}}^{*},Y_{E_{t}}^{*},A_{D_{E_{t}}},r\left(D_{E_{t}},z\right)\right\} _{t\geq0}$ satisfies \eqref{state equation, integral form}; 

  2. If $\left\{ X_{t},Y_{t},A_{t},r\left(t,z\right)\right\} _{t\geq0}$ satisfies \eqref{state equation, integral form}, then $\left\{ X_{D_{t}},Y_{D_{t}},A_{D_{t}},r\left(D_{t},z\right)\right\} _{t\geq0}$ satisfies \eqref{dual State equation-integral form}.
\end{Theorem}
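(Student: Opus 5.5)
The plan is to transport each integral in \eqref{dual State equation-integral form} to \eqref{state equation, integral form} and back, using the two change-of-variable formulas of Lemma \ref{change-of-variable formula}. Two pathwise facts about the time change drive everything. First, $E$ is continuous and nondecreasing with $E_{D_t}=t$, and $D_{E_t}=t$ off the (Lebesgue-null, countable) set of jump-flat intervals of $E$. Second, the driving semimartingales $Z_t=t$, $Z_t=B_t$ and $Z_t=\tilde N(\cdot,t)$ are time-changed into $E_t$, $B_{E_t}$ and $\tilde N(\cdot,E_t)$. Since $E$ is constant on each $[D_{t-},D_t]$, the processes $E$, $B\circ E$ and $\tilde N(\cdot, E_\cdot)$ are in synchronization with $E$. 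This is exactly the hypothesis needed to apply Lemma \ref{change-of-variable formula}, with filtration $\mathcal G_t=\mathcal F_{E_t}$ as in Lemma \ref{Ito's formula}.

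\textbf{Part 1.} Let $(X^*,Y^*,A_{D_\cdot},r(D_\cdot,z))$ solve \eqref{dual State equation-integral form}, and replace the time variable $t$ by $E_t$.

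For the forward equation, take $U_s=f(D_s,s,X^*_s,v(D_s))$. Part 1 of Lemma \ref{change-of-variable formula} gives
\begin{equation*}
\int_0^{E_t}U_s\,ds=\int_0^t U_{E_s}\,dE_s=\int_0^t f(D_{E_s},E_s,X^*_{E_s},v(D_{E_s}))\,dE_s .
\end{equation*}
We then use $D_{E_s}=s$ for $dE_s$-a.e.\ $s$: the exceptional set consists of times where $E$ is flat, which carry no $dE$-mass. The $dB$ and $\tilde N$ integrals are handled in the same way, with $Z=B$ and $Z=\tilde N(dz,\cdot)$ respectively. For the latter, we also use the fact that the compensator $\Pi(dz)ds$ becomes $\Pi(dz)dE_s$. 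This shows that $X^*_{E_t}$ solves the forward equation with control $v$.

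For the backward equation, evaluate the integral form at $E_t$ and write $\int_{E_t}^{E_T}=\int_0^{E_T}-\int_0^{E_t}$. Apply the same identity to each piece, now with $A_{D_{E_s}}=A_s$ and $r(D_{E_s},z)=r(s,z)$ holding $dE_s$-a.e. The terminal condition $Y^*_{E_T}=\phi(X^*_{E_T})$ is exactly $Y_T=\phi(X_T)$ for $Y_t:=Y^*_{E_t}$.

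\textbf{Part 2.} Start from a solution of \eqref{state equation, integral form}, evaluate it at $D_t$, and use part 2 of Lemma \ref{change-of-variable formula}:
\begin{equation*}
\int_0^{D_t}U'_s\,dZ_{E_s}=\int_0^{E_{D_t}}U'_{D_{s-}}\,dZ_s=\int_0^{t}U'_{D_{s-}}\,dZ_s .
\end{equation*}
Here $E_{D_t}=t$. Also $D_{s-}=D_s$ for Lebesgue-a.e.\ $s$, and the set where they differ is countable. So the $ds$ and $dB_s$ integrals are unaffected. The $\tilde N(dz,ds)$ integral is unaffected too, because $D$ and $N$ have a.s.\ no common jumps by independence, and the integrands are predictable anyway. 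Hence, for example, $\int_0^{D_t}f(s,E_s,X_s,v(s))\,dE_s$ becomes $\int_0^t f(D_s,s,X_{D_s},v(D_s))\,ds$. The backward equation is treated the same way, using $D_{E_T}$ together with the synchronization of $Y$ to match the terminal time $E_T$.

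\textbf{Main obstacle.} The delicate points are measurability and synchronization. First, we must check that the integrands, such as $U_s=f(D_s,s,X^*_s,\dots)$, are predictable with respect to the right filtration, so that they lie in $L(Z,\mathcal F_t)$. Second, we must justify replacing $D_{E_s}$ by $s$ and $D_{s-}$ by $D_s$ inside stochastic integrals, not just Lebesgue integrals. The key is that the integrators $E$, $B_E$ and $\tilde N(\cdot,E)$ are constant on flat intervals. Hence the integrals do not see the modification on those intervals, and the modification happens on a set of zero quadratic-variation measure.

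For the backward component there is an extra issue: we must also check that $Y_{D_t}$ is adapted to $\mathcal F_t$, and that the terminal time $T$ corresponds to $E_T$. I would handle this by restricting attention to $t\le E_T$ and invoking $D_{E_T-}\le T\le D_{E_T}$, together with the constancy of $Y$ on $[D_{E_T-},D_{E_T}]$.
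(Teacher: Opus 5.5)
Your proposal is correct and takes essentially the same route as the paper. For Part~1 you use the first change-of-variable formula together with $D_{E_s}=s$ off the flat intervals of $E$, which carry no $dE$-, $d\langle B_E\rangle$- or $\Pi(dz)dE$-mass; for Part~2 you use the second formula together with $E_{D_{s-}}=s$ and $D_{s-}=D_s$ off a countable set, and you treat the backward components the same way (your handling of the terminal time $E_T$ is in fact more careful than the paper's).

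There are two harmless slips. First, the union of the flat intervals of $E$ is not Lebesgue-null: for a driftless stable subordinator the range of $D$ is Lebesgue-null, so these intervals have full Lebesgue measure. Second, the synchronization hypothesis of Lemma~\ref{change-of-variable formula} concerns the integrators $s$, $B$, $\tilde N$ themselves (not $E$, $B\circ E$, $\tilde N(\cdot,E_\cdot)$), and it holds trivially because $E$ is continuous. Neither slip affects the argument, since it only ever uses $dE$-negligibility.
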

\begin{proof}
  We first observe that $E_{s}$ is constant when $s\in\left[D_{E_{s-}},D_{E_{s}}\right]$. Hence, 
  \begin{equation}
    \int_{t}^{T}w\left(D_{E_{s}}\right)-w(s)dE_{s}=0 \quad \text{and} \quad \int_{t}^{T}w\left(D_{E_{s}}\right)-w(s)dB_{E_{s}}=0
  \end{equation}
  for any function or functional $w$.

  For the forward part, on one hand, by \autoref{change-of-variable formula}, we obtain
  \begin{align}
    X_{t}=X_{0}^{*}
    &=x_{0}-\int_{0}^{E_{t}}f\left(D_{s},s,X_{s}^{*},v\left(D_{s}\right)\right)ds-\int_{0}^{E_{t}}\sigma\left(D_{s},s,X_{s}^{*},v\left(D_{s}\right)\right)dB_{s} \notag \\
    &\quad -\int_{0}^{E_{t}}\int_{|z|<c}b\left(D_{s},s,X_{s}^{*},v\left(D_{s}\right),z\right)\tilde{N}\left(ds,dz\right) \notag \\
    &=x_{0}-\int_{0}^{t}f\left(D_{E_{s}},E_{s},X_{E_{s}}^{*},v\left(D_{E_{s}}\right)\right)dE_{s}-\int_{0}^{t}\sigma\left(D_{E_{s}},E_{s},X_{E_{s}}^{*},v\left(D_{E_{s}}\right)\right)dB_{E_{s}} \notag \\
    &\quad -\int_{0}^{t}\int_{|z|<c}b\left(D_{E_{s}},E_{s},X_{E_{s}}^{*},v\left(D_{E_{s}}\right),z\right)\tilde{N}\left(dE_{s},dz\right) \notag \\
    &=x_{0}-\int_{0}^{t}f\left(s,E_{s},X_{s},v\left(s\right)\right)dE_{s}-\int_{0}^{t}\sigma\left(s,E_{s},X_{s},v\left(s\right)\right)dB_{E_{s}} \notag \\
    &\quad -\int_{0}^{t}\int_{|z|<c}b\left(s,E_{s},X_{s},v\left(s\right),z\right)\tilde{N}\left(dE_{s},dz\right),
  \end{align}
  which solves \eqref{state equation, integral form}.

  On the other hand, 
  \begin{align}
    X_{t}&=x_{0}-\int_{0}^{t}f\left(s,E_{s},X_{s},v\left(s\right)\right)dE_{s}-\int_{0}^{t}\sigma\left(s,E_{s},X_{s},v\left(s\right)\right)dB_{E_{s}}\notag \\
    &\quad -\int_{0}^{t}\int_{|z|<c}b\left(s,E_{s},X_{s},v\left(s\right),z\right)\tilde{N}\left(dE_{s},dz\right) \notag \\
    &=x_{0}-\int_{0}^{E_{t}}f\left(D_{s-},E_{D_{s-}},X_{D_{s-}},v\left(D_{s-}\right)\right)ds-\int_{0}^{E_{t}}\sigma\left(D_{s-},E_{D_{s-}},X_{D_{s-}},v\left(D_{s-}\right)\right)dB_{s} \notag \\
    &\quad -\int_{0}^{E_{t}}\int_{|z|<c}b\left(D_{s-},E_{D_{s-}},X_{D_{s-}},v\left(D_{s-}\right),z\right)\tilde{N}\left(ds,dz\right) \notag \\
    &=x_{0}-\int_{0}^{E_{t}}f\left(D_{s},s,X_{D_{s}},v\left(D_{s}\right)\right)ds-\int_{0}^{E_{t}}\sigma\left(D_{s},s,X_{D_{s}},v\left(D_{s}\right)\right)dB_{s} \notag \\
    &\quad -\int_{0}^{E_{t}}\int_{|z|<c}b\left(D_{s},s,X_{D_{s}},v\left(D_{s}\right),z\right)\tilde{N}\left(ds,dz\right).
  \end{align}
  Observe that,
  \begin{align}
    X_{t}^{*}=X_{D_{t}}&x_{0}-\int_{0}^{E_{t}}f\left(D_{s},s,X_{D_{s}},v\left(D_{s}\right)\right)\Pi\left(dz\right)ds-\int_{0}^{E_{t}}\sigma\left(D_{s},s,X_{D_{s}},v\left(D_{s}\right)\right)dB_{s} \notag \\
    &\quad -\int_{0}^{E_{t}}\int_{|z|<c}b\left(D_{s},s,X_{D_{s}},v\left(D_{s}\right),z\right)\tilde{N}\left(ds,dz\right)
  \end{align}
  solves \eqref{state equation, integral form}.

  Similarly, for the backward part, applying \autoref{change-of-variable formula} leads to
  \begin{align}
    Y_{t}=Y_{T}^{*}
    &=\phi\left(X_{E_{T}}^{*}\right)-\int_{E_{t}}^{E_{T}}\int_{|z|<c}-g\left(D_{s},s,X_{s}^{*},Y_{s}^{*},A_{D_{s}},r\left(D_{s},z\right),v\left(D_{s}\right)\right)\Pi\left(dz\right)ds \notag \\
    &\quad -\int_{E_{t}}^{E_{T}}A_{D_{s}}dB_{s}-\int_{E_{t}}^{E_{T}}\int_{|z|<c}r\left(D_{s},z\right)\tilde{N}\left(ds,dz\right) \notag \\
    &=\phi\left(X_{E_{T}}\right)-\int_{t}^{T}\int_{|z|<c}-g\left(D_{E_{s}},E_{s},X_{E_{s}}^{*},Y_{E_{s}}^{*},A_{D_{E_{s}}},r\left(D_{E_{s}},z\right),v\left(D_{E_{s}}\right)\right)\Pi\left(dz\right)dE_{s} \notag \\
    &\quad -\int_{t}^{T}A_{D_{E_{s}}}dB_{E_{s}}-\int_{t}^{T}\int_{|z|<c}r\left(D_{E_{s}},z\right)\tilde{N}\left(dE_{s},dz\right) \notag \\
    &=\phi\left(X_{E_{T}}\right)-\int_{t}^{T}\int_{|z|<c}-g\left(s,E_{s},X_{s},Y_{s},A_{s},r\left(s,z\right),v\left(s\right)\right)\Pi\left(dz\right)dE_{s} \notag \\
    &\quad -\int_{t}^{T}A_{s}dB_{E_{s}}-\int_{t}^{T}\int_{|z|<c}r\left(s,z\right)\tilde{N}\left(dE_{s},dz\right). 
  \end{align}
  which solves \eqref{dual State equation-integral form}.

  On the other hand, 
  \begin{align}
    Y_{t}&=\phi\left(X_{T}\right)-\int_{t}^{T}\int_{|z|<c}-g\left(s,E_{s},X_{s},Y_{s},A_{s},r\left(s,z\right),v\left(s\right)\right)\Pi\left(dz\right)dE_{s}\notag \\
    &\quad -\int_{t}^{T}A_{s}dB_{E_{s}}-\int_{t}^{T}\int_{|z|<c}r\left(s,z\right)\tilde{N}\left(dE_{s},dz\right) \notag \\
    &=\phi\left(X_{T}\right)-\int_{E_{t}}^{E_{T}}\int_{|z|<c}-g\left(D_{s-},E_{D_{s-}},X_{D_{s-}},Y_{D_{s-}},A_{D_{s-}},r\left(D_{s-},z\right),v\left(D_{s-}\right)\right)\Pi\left(dz\right)ds \notag \\
    &\quad -\int_{E_{t}}^{E_{T}}A_{D_{s-}}dB_{s}-\int_{E_{t}}^{E_{T}}\int_{|z|<c}r\left(D_{s-},z\right)\tilde{N}\left(ds,dz\right) \notag \\
    &=\phi\left(X_{T}\right)-\int_{E_{t}}^{E_{T}}\int_{|z|<c}-g\left(D_{s},s,X_{D_{s}},Y_{D_{s}},A_{D_{s}},r\left(D_{s},z\right),v\left(D_{s}\right)\right)\Pi\left(dz\right)ds \notag \\
    &\quad -\int_{E_{t}}^{E_{T}}A_{D_{s}}dB_{s}-\int_{E_{t}}^{E_{T}}\int_{|z|<c}r\left(D_{s},z\right)\tilde{N}\left(ds,dz\right).
  \end{align}
  Observe that
  \begin{align}
    Y_{t}^{*}=Y_{D_{t}}&=\phi\left(X_{E_{T}}^{*}\right)-\int_{t}^{E_{T}}-g\left(D_{s},s,X_{D_{s}},Y_{D_{s}},A_{D_{s}},r\left(D_{s},z\right),v\left(D_{s}\right)\right)\Pi\left(dz\right)ds \notag \\
    &\quad -\int_{t}^{E_{T}}A_{D_{s}}dB_{s}-\int_{t}^{E_{T}}\int_{|z|<c}r\left(D_{s},z\right)\tilde{N}\left(ds,dz\right)
  \end{align}
  solves \eqref{dual State equation-integral form}.
\end{proof}

\begin{Theorem}\label{existence and uniqueness of dual system}
    Let \autoref{assumption} hold. 
    Then there exist a unique solution to the dual system \eqref{dual state equation}.
\end{Theorem}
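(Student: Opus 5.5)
The dual system \eqref{dual state equation} is driven by the ordinary Brownian motion $B$ and the ordinary compensated Poisson measure $\tilde N(dz,dt)$ in calendar time, with $D$ and $E$ independent of $(B,N)$. I will therefore prove existence and uniqueness conditionally on the path of $D$ (equivalently of $E$), by reducing to the classical theory of decoupled FBSDEs with jumps. Fix a path of $D$ outside a $\mathbb{P}_D$-null set. Then $s\mapsto D_s$ is a deterministic càdlàg increasing function and $E_T$ is a deterministic finite horizon. Under $\mathbb{P}_B$ (jointly with the Poisson measure), the coefficients $\tilde f(s,x,v)=f(D_s,s,x,v(D_s))$, $\tilde\sigma$, $\tilde b$ and $\tilde g$ are then predictable random fields on $[0,E_T]$.

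\textbf{Forward component.} By Assumption \ref{assumption}(i), these coefficients are globally Lipschitz in $x$, uniformly in $s$. By (vi), the values at $(x,v)=(0,0)$ are square integrable, and since $v\in\mathcal U_{ad}$, the terms $v(D_s)$ contribute linearly-bounded, square-integrable terms. I will verify these square-integrability conditions for the time-changed integrands using Lemma \ref{change-of-variable formula}. I then run the standard Picard iteration in $L^2_{\mathcal F}([0,E_T])$, with the norm $\mathbb{E}\sup_{s\le E_T}|X_s|^2$. The estimates come from the Burkholder–Davis–Gundy inequality for the Brownian integral and the Itô isometry for the $\tilde N$-integral. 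Assumption (iii) supplies the bound $\int_{|z|<c}|b_x|^2\Pi(dz)$ needed for the Lipschitz estimate of the jump term. Gronwall's inequality (the classical one, or Lemma \ref{Gronwall's inequality} with $E_s=s$) then gives a contraction after finitely many iterates, hence a unique strong solution $X^{*}$ on $[0,E_T]$.

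\textbf{Backward component.} With $X^*$ fixed, the terminal value $\phi(X^*_{E_T})$ is square integrable by (v), since $\phi_x$ is bounded, $\phi(0)\in L^2$ and $X^*_{E_T}\in L^2$. The generator $(s,y,a,r)\mapsto \int_{|z|<c} g(D_s,s,X^*_s,y,a,r(z),v(D_s))\Pi(dz)$ is Lipschitz in $(y,a,r)$, with $r$ measured in $L^2(\Pi)$. I apply the Tang–Li existence and uniqueness theorem for BSDEs with jumps on $[0,E_T]$, or reprove it: apply Itô's formula (Lemma \ref{Ito's formula} with $E$ replaced by identity time) to $e^{\beta s}|Y^1_s-Y^2_s|^2$, choose $\beta$ large, and obtain a contraction on the Banach space of triples $(Y,\tilde A,\tilde r)$. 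This yields a unique $(Y^*,\tilde A,\tilde r)$.

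\textbf{Identification with the time-changed unknowns.} Define $A_{D_s}:=\tilde A_s$ and $r(D_s,z):=\tilde r_s(z)$, and extend $A$, $r$ to be constant on the jump intervals of $D$, i.e.\ set $A_t:=\tilde A_{E_t}$. This is consistent with the synchronization used in Theorem \ref{duality of general state equations}, and the extension does not affect uniqueness because integrals against $dE$, $dB_E$ and $\tilde N(dz,dE)$ do not see those intervals.

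\textbf{Measurability in the path of $D$.} Finally I glue the conditional solutions over $\omega_D$ and show the result is jointly measurable and adapted to $\mathcal G_t=\mathcal F_{E_t}$. This uses the product structure $\mathbb P=\mathbb P_B\times\mathbb P_D$ and the fact that Picard iterates are measurable functionals of the path of $D$, so their limits are too. Integrability with respect to the full measure $\mathbb P$ requires moments of $E_T$. The Gronwall constants are of the form $e^{CE_T}$, and $E_T$ has Mittag-Leffler law, so $\mathbb E e^{CE_T}<\infty$ and the bounds integrate.

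I expect the main obstacle to be this last step: keeping constants uniform in the random horizon $E_T$ and ensuring the conditional solutions assemble into a jointly measurable, properly adapted solution. I will first try to handle it by working with the weighted norm $\mathbb E[e^{-\beta E_T}\sup_s|\cdot|^2]$. If that fails, I will fall back on the exponential moments of the Mittag-Leffler distribution.
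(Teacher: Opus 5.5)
This is essentially the paper's own ``particle path'' argument: freeze the path of $D$ using $\mathbb{P}=\mathbb{P}_B\times\mathbb{P}_D$, solve the forward SDE by classical theory on the now deterministic horizon $[0,E_T]$, substitute it into the BSDE with jumps, and glue the conditional solutions over $\omega_D$. You also supply details the paper omits, namely the jump-term Lipschitz estimate from (iii), the identification $A_t=\tilde A_{E_t}$, joint measurability, and integrability via the finite exponential moments of the Mittag-Leffler variable $E_T$.

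One caveat is shared with the paper: two steps you only assert hold only if the hypotheses are read generously. First, Lipschitz continuity of $\int_{|z|<c}g(\ldots,r(z),\ldots)\Pi(dz)$ in $(y,a,r)$ with $r\in L^2(\Pi)$ needs $\Pi(\{|z|<c\})<\infty$, or $z$-dependent $\Pi$-integrable Lipschitz constants; this is not automatic in the infinite-L\'evy-measure case the paper declares. Second, square integrability of $v(D_s)$ on $[0,E_T]$ amounts to $\mathbb{E}\int_0^T|v(t)|^2\,dE_t<\infty$, and the $dt$-based definition of $\mathcal U_{ad}$ does not give this, since $dE_t$ is singular with respect to $dt$.
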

\begin{proof}
    The key to the proof lies in adopting the "particle path" technique. By decomposing the problem into a product space and utilizing the independence between the processes $D$ and $B$, we transform a complex time-changed FBSDEwLN with coefficients depending on $D_t$ into a family of classical FBSDEwLNs whose coefficients are non-random and do not involve time changes. Then, using the theory of classical FBSDEs, we prove that the solution exists and is unique for each family of problems, thereby assembling the solution to the original problem.

    First, on the product space $\Omega=\Omega_{B}\times\Omega_{D}$, we define the original process $(X^{v,*},Y^{v,*},A^{v},r^{v})$.

    Next, for each fixed $\omega_{2}\in\Omega_{D}$, we define the marginal process on the space $\Omega=\Omega_{B}\times\Omega_{D}$ as follows:
    \begin{align}
      \begin{cases}
dX_{t}^{v,*,\omega_{2}}=f^{\omega_{2}}\left(D_{t}\left(\omega_{2}\right),t,X_{t}^{v,*,\omega_{2}},v\left(D_{t}\left(\omega_{2}\right)\right)\right)dt\\
\quad\quad\quad\quad+\sigma^{\omega_{2}}\left(D_{t}\left(\omega_{2}\right),t,X_{t}^{v,*,\omega_{2}},v\left(D_{t}\left(\omega_{2}\right)\right)\right)dB_{t}\\
\quad\quad\quad\quad+\int_{|z|<c}b^{\omega_{2}}\left(D_{t}\left(\omega_{2}\right),t,X_{t-}^{v,*,\omega_{2}},v\left(D_{t}\left(\omega_{2}\right)\right),z\right)\tilde{N}\left(dt,dz\right),\\
-Y_{t}^{\omega_{2}}=\int_{|z|<c}g^{\omega_{2}}\Big(D_{t}\left(\omega_{2}\right),t,X_{t}^{v,*,\omega_{2}},Y_{t}^{v,*,\omega_{2}},A_{D_{t}\left(\omega_{2}\right)}^{v,\omega_{2}},\\
\quad\quad\quad\quad r^{v,\omega_{2}}\left(D_{t}\left(\omega_{2}\right),z\right),v\left(D_{t}\left(\omega_{2}\right)\right)\Big)\Pi\left(dz\right)dt\\
\quad\quad\quad\quad-A_{D_{t}\left(\omega_{2}\right)}^{v,\omega_{2}}dB_{t}-\int_{|z|<c}r^{v,\omega_{2}}\left(D_{t}\left(\omega_{2}\right),z\right)\tilde{N}\left(dz,dt\right),\\
X_{0}^{v,*,\omega_{2}}=x_{0},\\
Y_{E_{T}}^{v,*,\omega_{2}}=\phi\left(X_{E_{T}}^{v,*,\omega_{2}}\right). 
\end{cases} \label{marginal equation}
    \end{align}
    The coefficients are obtained by replacing $D_{t}$ with $D_{t}(\omega_{2})$.

    Given the initial state $x_0$ and control $v(\cdot)$, under \autoref{assumption}, the forward part of the marginal system \eqref{marginal equation} admits a unique solution $X^v_{\cdot}$ (see \cite{Ikeda1989}).

After obtaining the unique solution $X^v_{\cdot}$ to the forward equation, we treat it as a known process and substitute it into the backward part of the marginal equation \eqref{marginal equation}. 
Since $X^v_{\cdot}$ is an adapted process, this BSDE admits a unique solution triplet $\big(Y^v_{\cdot}, A^v_{\cdot}, r^v_{\cdot}\big)$.

Since the above conclusion holds for almost all $\omega_{2}$, and the solution is measurable with respect to $\omega_{2}$, we define the process on the product space as:
\begin{align}
  &\quad \; \left(X^{v,*}(\omega_{1},\omega_{2}),Y^{v,*}(\omega_{1},\omega_{2}),A^{v}(\omega_{1},\omega_{2}),r^{v}(\omega_{1},\omega_{2})\right) \notag \\
  &=\left(X^{v,*,\omega_{2}}(\omega_{1}),Y^{v,*,\omega_{2}}(\omega_{1}),A^{v,\omega_{2}}(\omega_{1}),r^{v,\omega_{2}}(\omega_{1})\right) \notag
\end{align}
This quadruple $(X^{v,*},Y^{v,*},A^{v},r^{v})$ satisfies the original FBSDE system with $D_t$.
Uniqueness is also guaranteed by the uniqueness of the marginal solution \eqref{marginal equation}.
Therefore, under \autoref{assumption}, the dual system \eqref{dual state equation} admits a unique strong solution.
\end{proof}

Thus, by \autoref{existence and uniqueness of dual system} and \autoref{duality of general state equations}, there exists a unique solution to \eqref{state equation, integral form}, and the optimal control problem is well defined.

\section{Stochastic Maximum Principle}

\subsection{Convex variational method}

Next we utilize the classic convex variation method
(see \cite{bensoussan2006lectures,shi2010SMP}).
Let $u(\cdot)$ be an optimal control and let $\left(X_{\cdot},Y_{\cdot},A(\cdot),r\left(\cdot,\cdot\right)\right)$ be the corresponding optimal trajectory.
Let $v(\cdot)$ be such that $u(\cdot)+v(\cdot)\in\mathcal{U}$.
Since $\mathcal{U}$ is convex, then for any $0\leq\rho\leq1$, the perturbed control $u^{\rho}(\cdot):=u(\cdot)+\rho v(\cdot)$ is also in $\mathcal{U}$.
We define $\left(X_{t}^{1,\rho},Y_{t}^{1,\rho},A_{t}^{1,\rho},r^{1,\rho}\left(t,z\right)\right)$ as the trajectory corresponding to $u^{\rho}(\cdot)$:

\begin{align}
  &X_{t}^{\rho}=X_{t}+\rho X_{t}^{1,\rho}+\tilde{X}_{t}^{\rho}, 
  \quad\quad Y_{t}^{\rho}=Y_{t}+\rho Y_{t}^{1,\rho}+\tilde{Y}_{t}^{\rho}, \notag \\
  &A_{t}^{\rho}=A_{t}+\rho A_{t}^{1,\rho}+\tilde{A}_{t}^{\rho}, 
  \quad \quad \; \; r^{\rho}\left(t,z\right)=r\left(t,z\right)+\rho r^{1,\rho}\left(t,z\right)+\tilde{r}^{\rho}\left(t,z\right). \label{perturbed system}
\end{align}

where 
$\left(\tilde{X}_{t}^{\rho},\tilde{Y}_{t}^{\rho},\tilde{A}_{t}^{\rho},\tilde{r}^{\rho}\left(t,z\right)\right)$ is higher-order remainder terms of the perturbed solution;

and 
$\left(X_{t}^{1,\rho},Y_{t}^{1,\rho},A_{t}^{1,\rho},r^{1,\rho}\left(t,z\right)\right) $ is the solution of following variational equations:

\begin{align}
  \begin{cases}
dX_{t}^{1,\rho} & =\left[f_{x}\left(t,E_{t},X_{t},u\left(t\right)\right)X_{t}^{1,\rho}+f_{v}\left(t,E_{t},X_{t},u\left(t\right)\right)v\left(t\right)\right]dE_{t}\\
 & \quad+\left[\sigma_{x}\left(t,E_{t},X_{t},u\left(t\right)\right)X_{t}^{1,\rho}+\sigma_{v}\left(t,E_{t},X_{t},u\left(t\right)\right)v\left(t\right)\right]dB_{E_{t}}\\
 & \quad+\int_{|z|<c}\left[b_{x}\left(t,E_{t},X_{t-},u\left(t\right),z\right)X_{t}^{1,\rho}+b_{v}\left(t,E_{t},X_{t-},u\left(t\right),z\right)v\left(t\right)\right]\tilde{N}(dE_{t},dz),\\
-dY_{t}^{1,\rho} & =\int_{|z|<c}\Big[g_{x}\left(t,E_{t},z\right)X_{t}^{1,\rho}+g_{y}\left(t,E_{t},z\right)Y_{t}^{1,\rho}+g_{a}\left(t,E_{t},z\right)A_{t}^{1,\rho}+g_{r}\left(t,E_{t},z\right)R^{1,\rho}\left(t,z\right)\\
 & \quad\quad+g_{v}\left(t,E_{t},z\right)v\left(t\right)\Big]\Pi(dz)dE_{t}-A_{t}^{1,\rho}dB_{E_{t}}-\int_{|z|<c}r^{1,\rho}\left(t,z\right)\tilde{N}\left(dE_{t},dz\right),\\
X_{0}^{1,\rho} & =0,\\
Y_{T}^{1,\rho} & =\phi_{x}\left(X_{T}\right)X_{T}^{1,\rho}.
\end{cases}   \label{variational equaion}
\end{align}

Under \autoref{assumption}, there exists a unique $\left(X_{\cdot}^{1,\rho},Y_{\cdot}^{1,\rho},A_{\cdot}^{1,\rho},r^{1,\rho}(\cdot,\cdot)\right)\in L_{\mathcal{F}}^{2}([0,T];\mathbb{R}^{n})\times L_{\mathcal{F}}^{2}([0,T];\mathbb{R}^{m})\times L_{\mathcal{F},p}^{2}([0,T];\mathbb{R}^{m\times d})\times F_{p}^{2}([0,T];\mathbb{R}^{m})$ satisfying \eqref{variational equaion}.

From \eqref{perturbed system}, for $t\in[0,T]$, we have 
\begin{align}
  &\tilde{X}_{t}^{\rho}=\frac{X_{t}^{\rho}-X_{t}}{\rho}-X_{t}^{1,\rho}, 
  \quad \tilde{Y}_{t}^{\rho}:=\frac{Y_{t}^{\rho}-Y_{t}}{\rho}-Y_{t}^{1,\rho},  \notag \\
  &\tilde{A}_{t}^{\rho}:=\frac{A_{t}^{\rho}-A_{t}}{\rho}-A_{t}^{1,\rho}, 
  \quad \tilde{r}^{\rho}\left(t,z\right):=\frac{r^{\rho}\left(t,z\right)-r\left(t,z\right)}{\rho}-r^{1,\rho}\left(t,z\right). \label{high-order remainder terms}
\end{align}

Next, we obtain the convergence of the high-order remainder terms as the following:
\begin{Lemma}\label{Lemma 1}
  Let \autoref{assumption} hold. Then
  \begin{align}
    &\underset{\rho\rightarrow0}{lim}\underset{0\leq t\leq T}{sup}\mathbb{E}\left|\tilde{X}_{t}^{\rho}\right|^{2}=0,
    \quad \quad \underset{\rho\rightarrow0}{lim}\underset{0\leq t\leq T}{sup}\mathbb{E}\left|\tilde{Y}_{t}^{\rho}\right|^{2}=0, \notag \\
    &\underset{\rho\rightarrow0}{lim}\mathbb{E}\int_{0}^{T}\left|\tilde{A}_{t}^{\rho}\right|^{2}dE_{t}=0,
    \quad \underset{\rho\rightarrow0}{lim}\mathbb{E}\int_{0}^{T}\left|\tilde{r}^{\rho}\left(t,z\right)\right|^{2}\Pi(dz)dE_{t}=0.
  \end{align}
\end{Lemma}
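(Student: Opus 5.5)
We first treat the forward component and then feed its estimate into the backward one, in the spirit of the classical convex variation argument of \cite{shi2010SMP}, with every $dt$-estimate replaced by its $dE_t$ counterpart.

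\textbf{Forward remainder.} Subtracting the equations for $X^\rho$, $X$ and $\rho X^{1,\rho}$ and dividing by $\rho$, we obtain a linear equation for $\tilde X^\rho$. The Taylor expansion in integral form gives
\[
\frac{f(t,E_t,X^\rho_t,u^\rho_t)-f(t,E_t,X_t,u_t)}{\rho}=\bar f^\rho_x(t)\,(X^1_t+\tilde X^\rho_t)+\bar f^\rho_v(t)\,v_t ,
\]
where $\bar f^\rho_x=\int_0^1 f_x(t,E_t,X_t+\lambda(X^\rho_t-X_t),u_t+\lambda\rho v_t)\,d\lambda$, and similarly for $\sigma$ and $b$. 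Hence $\tilde X^\rho$ satisfies
\[
d\tilde X^\rho_t=\big[\bar f^\rho_x\tilde X^\rho_t+\delta f^\rho_t\big]dE_t+\big[\bar\sigma^\rho_x\tilde X^\rho_t+\delta\sigma^\rho_t\big]dB_{E_t}+\int_{|z|<c}\big[\bar b^\rho_x\tilde X^\rho_{t-}+\delta b^\rho_t\big]\tilde N(dE_t,dz),
\]
with $\delta f^\rho=(\bar f^\rho_x-f_x)X^1+(\bar f^\rho_v-f_v)v$, and analogously for $\delta\sigma^\rho$ and $\delta b^\rho$.

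We square, take suprema, and use the Burkholder–Davis–Gundy and isometry inequalities for $dB_{E_t}$ and $\tilde N(dE_t,dz)$. These are justified via \autoref{change-of-variable formula}, since the integrals equal ordinary stochastic integrals up to time $E_t$. Using Assumption (iii) to bound the derivatives, this yields
\[
\mathbb E|\tilde X^\rho_t|^2\le C\,\mathbb E\int_0^t|\tilde X^\rho_s|^2\,dE_s+C\,\mathbb E\int_0^T\Big(|\delta f^\rho|^2+|\delta\sigma^\rho|^2+\int|\delta b^\rho|^2\Pi(dz)\Big)dE_s .
\]

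\textbf{Main obstacle.} \autoref{Gronwall's inequality} is pathwise, while the bound above holds in expectation against the random clock $dE_s$. We would work instead with the pathwise/conditional form. Conditioning on $\mathcal F^D$ (that is, fixing $\omega_2$ as in the proof of \autoref{existence and uniqueness of dual system}) makes $E$ deterministic. Then \autoref{Gronwall's inequality} applies with $n(t)$ equal to the conditional remainder term, giving a factor $e^{CE_T}$. We then integrate over $\omega_2$ using $\mathbb E e^{CE_T}<\infty$, which holds since $E_T$ is Mittag-Leffler distributed with exponential moments of all orders.

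\textbf{Vanishing of the remainders.} The remainder terms tend to $0$ by dominated convergence. First, a standard Lipschitz estimate gives $\sup_t\mathbb E|X^\rho_t-X_t|^2\le C\rho^2$, so $X^\rho\to X$ in probability. The derivatives are continuous and bounded, so $\bar f^\rho_x-f_x\to0$ pointwise boundedly. Finally, $X^1$ and $v$ are square integrable, with $dE$-integrability secured again by conditioning on $D$. Together these give $\sup_t\mathbb E|\tilde X^\rho_t|^2\to0$.

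\textbf{Backward remainder.} For the backward part, $(\tilde Y^\rho,\tilde A^\rho,\tilde r^\rho)$ solves a linear time-changed BSDE. Its terminal value is
\[
\frac{\phi(X^\rho_T)-\phi(X_T)}{\rho}-\phi_x(X_T)X^1_T=\bar\phi^\rho_x\tilde X^\rho_T+(\bar\phi^\rho_x-\phi_x)X^1_T ,
\]
and its driver is $\int\big[\bar g^\rho_x\tilde X^\rho+\bar g^\rho_y\tilde Y^\rho+\bar g^\rho_a\tilde A^\rho+\bar g^\rho_r\tilde r^\rho+\delta g^\rho\big]\Pi(dz)$. We apply the Itô formula of \autoref{Ito's formula} to $|\tilde Y^\rho_t|^2$ on $[t,T]$, with $k$ the drift and $h=\tilde r^\rho$. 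This produces $\mathbb E\int_t^T|\tilde A^\rho|^2dE_s+\mathbb E\int_t^T\int|\tilde r^\rho|^2\Pi(dz)dE_s$ on the left. The cross terms are absorbed by Young's inequality with a small parameter.

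The resulting estimate is
\[
\mathbb E|\tilde Y^\rho_t|^2+\tfrac12\mathbb E\int_t^T\Big(|\tilde A^\rho|^2+\int|\tilde r^\rho|^2\Pi(dz)\Big)dE_s\le C\,\mathbb E\Big[|\tilde X^\rho_T|^2+|(\bar\phi^\rho_x-\phi_x)X^1_T|^2+\int_t^T\big(|\tilde X^\rho_s|^2+|\tilde Y^\rho_s|^2+|\delta g^\rho|^2\big)dE_s\Big].
\]
A backward Grönwall argument, done conditionally on $D$ as before, together with the forward result and dominated convergence for $\delta g^\rho$ and $\bar\phi^\rho_x-\phi_x$, gives all four limits in \autoref{Lemma 1}.
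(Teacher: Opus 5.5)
Your argument follows the paper's skeleton. You linearize to get equations for $(\tilde X^\rho,\tilde Y^\rho,\tilde A^\rho,\tilde r^\rho)$, apply It\^o's formula to $|\tilde X^\rho|^2$ and then to $|\tilde Y^\rho|^2$, absorb the $\tilde A^\rho,\tilde r^\rho$ cross terms by Young's inequality, and close with Gr\"onwall, forward part first. The one genuine difference is your conditioning on $D$. The paper applies \autoref{Gronwall's inequality}, which is a pathwise statement, directly to an inequality between expectations taken against the random clock $dE_t$. You are right that this step is not legitimate, and fixing $\omega_2$ so that $dE_s(\omega_2)$ is a deterministic Stieltjes measure is the right repair. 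It yields the factor $e^{CE_T(\omega_2)}$ only if you estimate via It\^o's formula for $|\tilde X^\rho|^2$, as the paper does, so that the drift enters as $\langle 2\tilde X^\rho,\bar f^\rho_x\tilde X^\rho+\delta f^\rho\rangle\,dE_s$ with a deterministic constant. If you literally ``square'' and apply Cauchy--Schwarz to the drift, then $|\int_0^t G\,dE_s|^2\le E_t\int_0^t|G|^2dE_s$ puts $E_T$ into $C$. You then get $e^{CE_T^2}$, which has infinite expectation when $\alpha<1/2$.

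The real gap is the passage from conditional to unconditional bounds.
\begin{itemize}
\item What you actually obtain is $\sup_t\mathbb{E}|\tilde X^\rho_t|^2\le\mathbb{E}_D\big[n^\rho(\omega_2)\,e^{CE_T}\big]$, with $n^\rho(\omega_2)=C\,\mathbb{E}_B\int_0^T\big(|\delta f^\rho|^2+|\delta\sigma^\rho|^2+\int|\delta b^\rho|^2\Pi(dz)\big)dE_s$.
\item Finiteness of $\mathbb{E}e^{CE_T}$ does not make this vanish. Dominated convergence in $\omega_2$ needs $e^{CE_T}\,\mathbb{E}_B\int_0^T(|X^1_s|^2+|v_s|^2)\,dE_s\in L^1(\mathbb{P}_D)$.
\item Conditionally, $\mathbb{E}_B|X^1_t|^2\le Ce^{CE_T}\mathbb{E}_B\int_0^T|v_s|^2dE_s$, so this reduces to a weighted integrability condition on $v$ alone.
\item Nothing in the hypotheses provides that condition, and conditioning cannot create it. $\mathcal{U}_{ad}$ only requires square integrability with respect to $dt\otimes d\mathbb{P}$. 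But $dE_t$ is singular with respect to $dt$: it charges only the Lebesgue-null range of the driftless subordinator $D$. So $dt$-integrability says nothing about $dE$-integrability, and an admissible $v$ may be large exactly where $E_T$ is large.
\item The same condition is used silently in your bound $\sup_t\mathbb{E}|X^\rho_t-X_t|^2\le C\rho^2$.
\item It is used again, with a larger exponent, in the backward step. There you must feed in the conditional forward bound rather than the unconditional limit, because it gets multiplied by $e^{CE_T}$.
\end{itemize}
The gap closes in either of two ways. One is to restrict to bounded controls, e.g.\ bounded $\mathcal U$. Then $\int_0^T|v|^2dE_s\le\|v\|_\infty^2E_T$, and $\mathbb{E}[E_T^ke^{\lambda E_T}]<\infty$ for all $k,\lambda$ by the Mittag-Leffler law. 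The other is to build the condition $\mathbb{E}\big[e^{\lambda E_T}\int_0^T|v_t|^2dE_t\big]<\infty$ for all $\lambda>0$ into $\mathcal{U}_{ad}$. The paper's proof does not address this either; it simply writes the remainder as $o(\rho)$. So this is a hole in the framework rather than a defect relative to the paper, but your argument as stated does not close it.
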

\begin{proof}
  For the forward part, we have
  \begin{align}
    \begin{cases}
d\tilde{X}_{t}^{\rho}=\left[G^{1\rho}\left(t,E_{t})\right)\tilde{X}+G_{t}^{2\rho}\left(t,E_{t})\right)\right]dE_{t}+\left[G_{t}^{3\rho}\left(t,E_{t})\right)\tilde{X}_{t}^{\rho}+G_{t}^{4\rho}\left(t,E_{t})\right)\right]dB_{E_{t}}\\
\quad\quad+\int_{|z|<c}\left[G^{5\rho}\left(t-,E_{t},z\right)\tilde{X}_{t-}^{\rho}+G^{6\rho}\left(t-,E_{t},z\right)\right]\widetilde{N}\left(dz,dE_{t}\right),\\
\tilde{X}_{0}^{\rho}=0,
\end{cases}
  \end{align}
  where
  \begin{align}
    &G^{1\rho}\left(t,E_{t}\right):=\int_{0}^{1}f_{x}\left(t,E_{t},X_{t}+\lambda\rho\left(X_{t}^{1,\rho}+\tilde{X}_{t}^{\rho}\right),u(t)+\lambda\rho v(t)\right)d\lambda, \notag \\
    &G^{2\rho}\left(t,E_{t}\right):=\left[G^{1\rho}\left(t,E_{t}\right)-f_{x}\left(t,E_{t},X_{t},u(t)\right)\right]X_{t}^{1,\rho} \notag \\
    &\quad+\int_{0}^{1}\left[f_{v}\left(t,E_{t},X_{t},u(t)+\lambda\rho v(t)\right)-f_{v}\left(t,E_{t},X_{t},u(t)\right)\right]v(t)d\lambda, \notag \\
    &G^{3\rho}\left(t,E_{t}\right):=\int_{0}^{1}\sigma_{x}\left(t,E_{t},X_{t}+\lambda\rho\left(X_{t}^{1,\rho}+\tilde{X}_{t}^{\rho}\right),u(t)+\lambda\rho v(t)\right)d\lambda, \notag \\
    &G^{4\rho}\left(t,E_{t}\right):=\left[G^{3\rho}\left(t,E_{t}\right)-\sigma_{x}\left(t,E_{t},X_{t},u(t)\right)\right]X_{t}^{1,\rho} \notag \\
    &\quad+\int_{0}^{1}\left[\sigma_{v}\left(t,E_{t},X_{t},u(t)+\lambda\rho v(t)\right)-\sigma_{v}\left(t,E_{t},X_{t},u(t)\right)\right]v(t)d\lambda, \notag \\
    &G^{5\rho}\left(t-,E_{t},z\right):=\int_{0}^{1}b_{x}\left(t,E_{t},X_{t}+\lambda\rho\left(X_{t}^{1,\rho}+\tilde{X}_{t}^{\rho}\right),u(t)+\lambda\rho v(t)\right)d\lambda, \notag \\
    &G^{6\rho}\left(t-,E_{t},z\right):=\left[G^{5\rho}\left(t-,E_{t},z\right)-b_{x}\left(t,E_{t},X_{t},u(t)\right)\right]X_{t}^{1,\rho} \notag \\
    &\quad+\int_{0}^{1}\left[b_{v}\left(t,E_{t},X_{t},u(t)+\lambda\rho v(t)\right)-b_{v}\left(t,E_{t},X_{t},u(t)\right)\right]v(t)d\lambda. \notag
  \end{align}
  Applying Itô's formula to $\left|\tilde{X}_{t}^{\rho}\right|^{2}$ and using \autoref{assumption}, we obtain
  \begin{align}
    \mathbb{E}\left|\tilde{X}_{t}^{\rho}\right|^{2}&=\mathbb{E}\int_{0}^{T}\left[\langle2\tilde{X}_{t}^{\rho},G^{1\rho}\left(t,E_{t}\right)\tilde{X}_{t}^{\rho}+G^{2\rho}\left(t,E_{t}\right)\rangle+\left|G^{3\rho}\left(t,E_{t}\right)\tilde{X}_{t}^{\rho}+G^{4\rho}\left(t,E_{t}\right)\right|^{2}\right]dE_{t} \notag \\
    & \quad+\mathbb{E}\int_{0}^{T}\int_{|z|<c}|G^{5\rho}\left(t-,E_{t},z\right)\tilde{X}_{t}^{\rho}+G^{6\rho}\left(t-,E_{t},z\right)|^{2}\Pi(dz)dE_{t} \notag \\
    &\leq C\mathbb{E}\int_{0}^{T}|\tilde{X}_{t}^{\rho}|^{2}dE_{t}+o(\rho).
  \end{align}
  Applying \autoref{Gronwall's inequality}, we obtain the first result of the lemma.

  For the backward part, 
  \begin{align}
    \begin{cases}
-d\tilde{Y}_{t}^{\rho}=\int_{|z|<c}\Big[\Lambda^{1\rho}\left(t,E_{t},z\right)\tilde{X}_{t}^{\rho}+\Lambda^{2\rho}\left(t,E_{t},z\right)\tilde{Y}_{t}^{\rho}+\Lambda^{3\rho}\left(t,E_{t},z\right)\tilde{A}_{t}^{\rho}+\Lambda^{4\rho}\left(t,E_{t},z\right)\widetilde{r}^{\rho}(t,z)\\
\quad\quad\quad+\Lambda^{5\rho}\left(t,E_{t},z\right)\Big]\Pi(dz)dE_{t}-\tilde{A}_{t}^{\rho}dB_{E_{t}}-\int_{|z|<c}\widetilde{r}^{\rho}(t,z)\widetilde{N}\left(dz,dE_{t}\right),\\
\tilde{Y}_{T}^{\rho}=\rho^{-1}\left[\phi(X_{T}^{\rho})-\phi(X_{T})\right]-\phi_{x}(X_{T})X_{T}^{1,\rho},
\end{cases}
  \end{align}
  where
  \begin{align}
    \Lambda^{1\rho}\left(t,E_{t},z\right)	&:=\int_{0}^{1}g_{x}\Big(t,E_{t},X_{t}+\lambda\rho\left(X_{t}^{1,\rho}+\tilde{X}_{t}^{\rho}\right),Y_{t}+\lambda\rho\left(Y_{t}^{1,\rho}+\tilde{Y}_{t}^{\rho}\right), \notag \\
	  &\quad\quad A_{t}+\lambda\rho\left(A_{t}^{1,\rho}+\tilde{A}_{t}^{\rho}\right),r(t,z)+\lambda\rho\left(r^{1,\rho}(t,z)+\tilde{r}^{\rho}(r,z)\right),u(t)+\lambda\rho v(t),z\Big)d\lambda \notag \\
    \Lambda^{2\rho}\left(t,E_{t},z\right)&:=\int_{0}^{1}g_{y}\Big(t,E_{t},X_{t}+\lambda\rho\left(X_{t}^{1,\rho}+\tilde{X}_{t}^{\rho}\right),Y_{t}+\lambda\rho\left(Y_{t}^{1,\rho}+\tilde{Y}_{t}^{\rho}\right), \notag \\
    &\quad A_{t}+\lambda\rho\left(A_{t}^{1,\rho}+\tilde{A}_{t}^{\rho}\right),r(t,z)+\lambda\rho\left(r^{1,\rho}(t,z)+\tilde{r}^{\rho}(r,z)\right),u(t)+\lambda\rho v(t),z\Big)d\lambda \notag \\
    \Lambda^{3\rho}\left(t,E_{t},z\right)&:=\int_{0}^{1}g_{a}\Big(t,E_{t},X_{t}+\lambda\rho\left(X_{t}^{1,\rho}+\tilde{X}_{t}^{\rho}\right),Y_{t}+\lambda\rho\left(Y_{t}^{1,\rho}+\tilde{Y}_{t}^{\rho}\right), \notag \\
    &\quad A_{t}+\lambda\rho\left(A_{t}^{1,\rho}+\tilde{A}_{t}^{\rho}\right),r(t,z)+\lambda\rho\left(r^{1,\rho}(t,z)+\tilde{r}^{\rho}(r,z)\right),u(t)+\lambda\rho v(t),z\Big)d\lambda \notag \\
    \Lambda^{4\rho}\left(t,E_{t},z\right)&:=\int_{0}^{1}g_{r}\Big(t,E_{t},X_{t}+\lambda\rho\left(X_{t}^{1,\rho}+\tilde{X}_{t}^{\rho}\right),Y_{t}+\lambda\rho\left(Y_{t}^{1,\rho}+\tilde{Y}_{t}^{\rho}\right), \notag \\
    &\quad A_{t}+\lambda\rho\left(A_{t}^{1,\rho}+\tilde{A}_{t}^{\rho}\right),r(t,z)+\lambda\rho\left(r^{1,\rho}(t,z)+\tilde{r}^{\rho}(r,z)\right),u(t)+\lambda\rho v(t),z\Big)d\lambda \notag \\
    \Lambda^{5\rho}\left(t,E_{t},z\right)&:=\left[\Lambda^{1\rho}\left(t,E_{t},z\right)-g_{x}(\cdot)\right]X_{t}^{1,\rho}+\left[\Lambda^{2\rho}\left(t,E_{t},z\right)-g_{y}(\cdot)\right]Y_{t}^{1,\rho} \notag \\
    &\quad+\left[\Lambda^{3\rho}\left(t,E_{t},z\right)-g_{z}(\cdot)\right]A_{t}^{1,\rho}+\left[\Lambda^{4\rho}\left(t,E_{t},z\right)-g_{r}(\cdot)\right]r^{1,\rho}(t,z)\notag \\
    &\quad+\int_{0}^{1}\Big[g_{v}\Big(t,E_{t},X_{t}+\lambda\rho\left(X_{t}^{1,\rho}+\tilde{X}_{t}^{\rho}\right),Y_{t}+\lambda\rho\left(Y_{t}^{1,\rho}+\tilde{Y}_{t}^{\rho}\right), \notag \\
    &\quad \quad \quad A_{t}+\lambda\rho\left(A_{t}^{1,\rho}+\tilde{A}_{t}^{\rho}\right),r(t,z)+\lambda\rho\left(r^{1,\rho}(t,z)+\tilde{r}^{\rho}(r,z)\right),u(t)+\lambda\rho v(t)\Big) \notag \\
    &\quad \quad \quad -g_{v}\left(t,E_{t},X_{t},Y_{t},A_{t},r(t,z),u(t),z\right)\Big]v(t) d\lambda. \notag 
  \end{align}
  Applying Itô's formula to $\left|\tilde{Y}_{t}^{\rho}\right|^{2}$, noting \autoref{assumption}, we have
  \begin{align}
    &\quad \; \mathbb{E}\left|\tilde{Y}_{t}^{\rho}\right|^{2}+\mathbb{E}\int_{t}^{T}\left|\tilde{A}_{s}^{\rho}\right|^{2}dE_{s}+\mathbb{E}\int_{t}^{T}\int_{|z|<c}|\widetilde{r}^{\rho}(s,z)|^{2}\Pi(dz)dE_{s} \notag \\
    &=\mathbb{E}\int_{t}^{T}\int_{|z|<c}\Big\langle2\tilde{Y}_{s}^{\rho},\Lambda^{1\rho}\left(s,E_{s},z\right)\tilde{X}_{s}^{\rho}+\Lambda^{2\rho}\left(s,E_{s},z\right)\tilde{Y}_{s}^{\rho}+\Lambda^{3\rho}\left(s,E_{s},z\right)\tilde{A}_{s}^{\rho}+\Lambda^{4\rho}\left(s,E_{s},z\right)\widetilde{r}^{\rho}(s,z) \notag \\
    &\quad+\Lambda^{5\rho}\left(s,E_{s},z\right)\Big\rangle\Pi(dz)dE_{s} +\mathbb{E}\left[\rho^{-1}\left(\phi(X_{T}^{\rho})-\phi(X_{T})\right)-\phi_{x}(X_{T})X_{T}^{1,\rho}\right]^{2} \notag \\
    &\leq C\mathbb{E}\int_{t}^{T}\left|\tilde{Y}_{t}^{\rho}\right|^{2}dE_{s}+\frac{1}{2}\mathbb{E}\int_{t}^{T}\left|\tilde{A}_{s}^{\rho}\right|^{2}dE_{s}+\frac{1}{2}\mathbb{E}\int_{t}^{T}\int_{|z|<c}|\widetilde{r}^{\rho}(s,z)|^{2}\Pi(dz)dE_{s}+o(\rho).
  \end{align}
  Applying \autoref{Gronwall's inequality} again, we can get the last three convergence results.
\end{proof}

Since $u(\cdot)$ is an optimal control, then
\begin{equation}
  \rho^{-1}\left[J\left(u^{\rho}(\cdot)\right)-J\left(u(\cdot)\right)\right]\geq0. \label{pre-variational inequality}
\end{equation}

From this and \autoref{Lemma 1}, we have the following:

\begin{Lemma}(Variational inequality)\label{Lemma 2}
  Let \autoref{assumption} hold.
  Then the following variational inequality holds:
  \begin{align}
    o(\rho)	&\leq\mathbb{E}\int_{0}^{T}\int_{|z|<c}\Big[l_{x}\left(t,E_{t},X_{t},Y_{t},A_{t},r\left(t,z\right),u\left(t,E_{t}\right)\right)X_{t}^{1,\rho} \notag \\
	&\quad+l_{y}\left(t,E_{t},X_{t},Y_{t},A_{t},r\left(t,z\right),u\left(t\right)\right)Y_{t}^{1,\rho}  \notag \\
	&\quad+l_{a}\left(t,E_{t},X_{t},Y_{t},A_{t},r\left(t,z\right),u\left(t\right)\right)A_{t}^{1,\rho} \notag \\
	&\quad+l_{r}\left(t,E_{t},X_{t},Y_{t},A_{t},r\left(t,z\right),u\left(t\right)\right)r^{1,\rho}\left(t,z\right) \notag \\
	&\quad+l_{v}\left(t,E_{t},X_{t},Y_{t},A_{t},r\left(t,z\right),u\left(t\right)\right)v\left(t\right)\Pi\left(dz\right)\Big]dE_{t} \notag \\
	&\quad+\mathbb{E}\left[h_{x}(X_{T})X_{T}^{1,\rho}\right]+\mathbb{E}\left[\gamma_{y}(Y_{0})Y_{0}^{1,\rho}\right].
  \end{align}
\end{Lemma}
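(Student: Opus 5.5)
The plan is to expand the difference quotient in \eqref{pre-variational inequality} term by term and pass to the limit using \autoref{Lemma 1}. First, I will write
\begin{align*}
\rho^{-1}\left[J(u^{\rho}(\cdot))-J(u(\cdot))\right]
&=\mathbb{E}\int_{0}^{T}\int_{|z|<c}\rho^{-1}\left[l^{\rho}(t,z)-l(t,z)\right]\Pi(dz)dE_{t}\\
&\quad+\mathbb{E}\,\rho^{-1}\left[h(X_{T}^{\rho})-h(X_{T})\right]+\mathbb{E}\,\rho^{-1}\left[\gamma(Y_{0}^{\rho})-\gamma(Y_{0})\right],
\end{align*}
where $l^{\rho}$ and $l$ denote $l$ evaluated along the perturbed and the optimal trajectories, respectively. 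Then I will apply the mean value theorem in integral form to each difference, in the same way the coefficients $G^{i\rho}$ and $\Lambda^{i\rho}$ were built in the proof of \autoref{Lemma 1}. For instance, by \eqref{perturbed system},
\[
\rho^{-1}\left[h(X_{T}^{\rho})-h(X_{T})\right]=\int_{0}^{1}h_{x}\left(X_{T}+\lambda\rho(X_{T}^{1,\rho}+\tilde{X}_{T}^{\rho})\right)d\lambda\,\left(X_{T}^{1,\rho}+\tilde{X}_{T}^{\rho}\right).
\]
The running cost and $\gamma$ are treated the same way, with the increments $X^{1,\rho}+\tilde X^{\rho}$, $Y^{1,\rho}+\tilde Y^{\rho}$, $A^{1,\rho}+\tilde A^{\rho}$, $r^{1,\rho}+\tilde r^{\rho}$ and the control increment $v$.

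Next, I will split each expansion into three parts: the principal term (derivative at the optimal point times the first-order variation, which is exactly the right-hand side of \autoref{Lemma 2}), a term of the form \emph{averaged derivative minus derivative at the optimal point} times the first-order variation, and a term of the form \emph{averaged derivative} times the remainder $\tilde X^{\rho},\tilde Y^{\rho},\tilde A^{\rho},\tilde r^{\rho}$. Each of the last two parts should be $o(1)$, and I will show this by Cauchy--Schwarz. The growth bounds in \autoref{assumption}(iv) together with the square-integrability of the state, the perturbed state and the variational processes keep the averaged derivatives uniformly bounded in $L^{2}$ with respect to the measure $\Pi(dz)\,dE_{t}\,d\mathbb{P}$. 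For the terminal and initial costs I will use the plain $L^{2}(\mathbb{P})$ norm instead.

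For the remainder part, \autoref{Lemma 1} gives directly that $\tilde X^{\rho}$, $\tilde Y^{\rho}$, $\tilde A^{\rho}$ and $\tilde r^{\rho}$ tend to $0$ in the relevant $L^{2}$ norms. This yields $o(1)$ once the $dE_t$-integrals of $\sup_t\mathbb{E}|\cdot|^{2}$ are controlled. I will control them by conditioning on $D$, using the product structure $\mathbb{P}=\mathbb{P}_{B}\times\mathbb{P}_{D}$ and the finiteness of moments of $E_{T}$, which follows from the Mittag-Leffler distribution.

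For the difference-of-derivatives part, note that $\rho(X^{1,\rho}+\tilde X^{\rho})\to0$ in $L^{2}$. Along a subsequence this convergence is a.e., so continuity of the derivatives (\autoref{assumption}(ii)) gives pointwise convergence of the difference. Uniform integrability, which comes from the linear growth bound, then lets me conclude by dominated (Vitali) convergence. Finally, $\rho^{-1}[J(u^{\rho})-J(u)]\ge0$ yields the stated inequality with the error term written as $o(\rho)$, following the paper's convention for a quantity vanishing as $\rho\to0$.

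The main obstacle I expect is the uniform integrability in the running-cost term involving $r^{1,\rho}$ and $\tilde r^{\rho}$. These carry the $\Pi(dz)$ integration over an infinite Lévy measure, while $l_{r}$ only grows linearly. I would first handle this by bounding $|l_{r}|\,|r^{1,\rho}|\le C(1+|x|+\dots+|r|+|v|)|r^{1,\rho}|$ and using that $r$, $r^{\rho}$ and $r^{1,\rho}$ all lie in $F_{p}^{2}$, so that Cauchy--Schwarz in $L^{2}(\Pi(dz)dE_{t}d\mathbb{P})$ applies. The terms in $x,y,a,v$ that do not depend on $z$ must be implicitly integrable against $\Pi$ on $\{|z|<c\}$, which I would take as built into the formulation of the cost.
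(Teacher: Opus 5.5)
Your decomposition is the paper's argument written out. The paper expands $h$, $\gamma$ and $l$ by the integral mean-value formula and simply asserts, citing \autoref{Lemma 1}, that each difference quotient converges to the first-order term. Your split into a derivative-difference part and a remainder part, handled by Cauchy--Schwarz and a Vitali argument, is what that assertion needs. Your caveat about the infinite L\'evy measure is also correct, and it is a defect of the paper's setup rather than of your argument: since $\Pi(\{|z|<c\})=\infty$, the $z$-independent parts of $l$ and of its derivatives are not $\Pi$-integrable, and the paper's proof ignores this too.

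The step that fails as written is the remainder part of the running cost. \autoref{Lemma 1} controls $\tilde X^{\rho}$ and $\tilde Y^{\rho}$ only in the norm $\sup_{t}\mathbb{E}|\cdot|^{2}$. You need $\mathbb{E}\int_{0}^{T}|\tilde X_{t}^{\rho}|^{2}dE_{t}\to0$, and the same for $\tilde Y^{\rho}$. Your Vitali step likewise needs $X^{\rho}-X\to0$ in $L^{2}(d\mathbb{P}\otimes dE_{t})$, not just at each fixed $t$. The first norm does not dominate the second. For each $\omega$, $dE_{t}(\omega)$ is carried by the closed range of $D(\omega)$, a Lebesgue-null random set that a fixed $t>0$ a.s.\ misses. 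So a process can vanish in $\mathbb{E}$ at every fixed time and still carry all of the $dE_{t}$-mass.

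Conditioning on $D$ does not repair this by itself. The identity $\mathbb{E}\int_{0}^{T}|\tilde X_{t}^{\rho}|^{2}dE_{t}=\mathbb{E}_{D}\bigl[\int_{0}^{T}\mathbb{E}_{B}|\tilde X_{t}^{\rho}|^{2}dE_{t}\bigr]$ requires $\sup_{t}\mathbb{E}_{B}|\tilde X_{t}^{\rho}|^{2}$ to be small as a function of $\omega_{2}$, and the unconditional statement of \autoref{Lemma 1} does not give that.

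The fix is to rerun the estimates of \autoref{Lemma 1} for fixed $\omega_{2}$. There $E(\omega_{2})$ is a deterministic, continuous, nondecreasing integrator, so \autoref{Gronwall's inequality} genuinely applies to $t\mapsto\mathbb{E}_{B}|\tilde X_{t}^{\rho}|^{2}$. This gives $\sup_{t\le T}\mathbb{E}_{B}|\tilde X_{t}^{\rho}|^{2}\le\varepsilon_{\rho}(\omega_{2})e^{CE_{T}(\omega_{2})}$, with $\varepsilon_{\rho}\to0$ pointwise and dominated via the conditional a priori bounds. Dominated convergence in $\omega_{2}$ then gives the $dE_{t}$-integrated convergence, because the Mittag-Leffler law of $E_{T}$ has finite exponential moments. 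So conditioning on $D$ and using moments of $E_T$ are the right ingredients, but they must enter inside the proof of \autoref{Lemma 1}, not be applied to its conclusion. The terminal and initial terms are unaffected, since $\mathbb{E}|\tilde X_{T}^{\rho}|^{2}$ and $\mathbb{E}|\tilde Y_{0}^{\rho}|^{2}$ are exactly what \autoref{Lemma 1} provides.
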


\begin{proof}
  For $h\Bigl(X_{T}^{\rho}\Bigr)-h\Bigl(X_{T}\Bigr)$, applying the first result of \autoref{Lemma 1}, we have
  \begin{align}
    \rho^{-1}\mathbb{E}\Bigl[h\Bigl(X_{T}^{\rho}\Bigr)-h\Bigl(X_{T}\Bigr)\Bigr] 
      & =\rho^{-1}\mathbb{E}\int_{0}^{1}h_{x}\Bigl(X_{T}+\lambda\Bigl(X_{T}^{\rho}-X_{T}\Bigr)\Bigr)\Bigl(X_{T}^{\rho}-X_{T}\Bigr)d\lambda \notag \\
    & \to\mathbb{E}\Bigl[h_{x}\Bigl(X_{T}\Bigr)X_{T}^{1,\rho}\Bigr]. \notag 
  \end{align}
  For $\gamma\Bigl(Y_{0}^{\rho}\Bigr)-\gamma\Bigl(Y_{0}\Bigr)$, similarly, we have
  \begin{align}
    \rho^{-1}\mathbb{E}\Bigl[\gamma\Bigl(Y_{0}^{\rho}\Bigr)-\gamma\Bigl(Y_{0}\Bigr)\Bigr] & =\rho^{-1}\mathbb{E}\int_{0}^{1}\gamma_{y}\Bigl(Y_{0}+\lambda\Bigl(Y_{0}^{\rho}-Y_{0}\Bigr)\Bigr)\Bigl(Y_{0}^{\rho}-Y_{0}\Bigr)d\lambda\notag \\
 & \to\mathbb{E}\Bigl[\gamma_{y}(Y_{0})Y_{0}^{1,\rho}\Bigr], \notag 
  \end{align}
  and 
  \begin{align} 
    &\quad \rho^{-1}\Bigl\{\mathbb{E}\int_{0}^{T}\int_{|z|<c}\Bigl[l\Bigl(t,E_{t},X_{t}^{\rho},Y_{t}^{\rho},A_{t}^{\rho},r^{\rho}(t,z),u(t)+\rho v(t)\Bigr)-l\left(t,E_{t},X_{t},Y_{t},A_{t},r(t,z),v(t)\right)\Bigr]\Pi(dz)dE_{t}\Bigr\} \notag \\
    &\to\mathbb{E}\int_{0}^{T}\int_{|z|<c}\Bigl[l_{x}\left(t,E_{t},X_{t},Y_{t},A_{t},r\left(t,z\right),u\left(t\right)\right)X_{t}^{1,\rho} \notag \\
 & \quad+l_{y}\left(t,E_{t},X_{t},Y_{t},A_{t},r\left(t,z\right),u\left(t\right)\right)Y_{t}^{1,\rho} \notag \\
 & \quad+l_{a}\left(t,E_{t},X_{t},Y_{t},A_{t},r\left(t,z\right),u\left(t\right)\right)A_{t}^{1,\rho} \notag \\
 & \quad+l_{r}\left(t,E_{t},X_{t},Y_{t},A_{t},r\left(t,z\right),u\left(t\right)\right)r^{1,\rho}\left(t,z\right) \notag \\
 & \quad+l_{v}\left(t,E_{t},X_{t},Y_{t},A_{t},r\left(t,z\right),u\left(t\right)\right)v\left(t\right)\Bigr]\Pi(dz)dE_{t}. \notag 
\\\end{align}
  Subsituite the above three results into \eqref{pre-variational inequality}, the proof is complete.
\end{proof}

Next, we introduce the following adjoint equations:

\begin{align}
  \begin{cases}
dp_{t}=\int_{|z|<c}\Big[g_{y}^{\mathsf{T}}\left(t,E_{t},X_{t},Y_{t},A_{t},r\left(t,z\right),u\left(t\right),z\right)p_{t}\\
\quad\quad-l_{y}^{\mathsf{T}}\left(t,E_{t},X_{t},Y_{t},A_{t},r\left(t,z\right),u\left(t\right)\right)\Big]\Pi\left(dz\right)dE_{t}\\
\quad\quad+\int_{|z|<c}\Big[g_{a}^{\mathsf{T}}\left(t,E_{t},X_{t},Y_{t},A_{t},r\left(t,z\right),u\left(t\right),z\right)p_{t}\\
\quad\quad-l_{a}^{\mathsf{T}}\left(t,E_{t},X_{t},Y_{t},A_{t},r\left(t,z\right),u\left(t\right)\right)\Big]\Pi\left(dz\right)dB_{E_{t}}\\
\quad\quad+\int_{|z|<c}\Big[g_{r}^{\mathsf{T}}\left(t,E_{t},X_{t-},Y_{t-},A_{t-},r\left(t,z\right),u\left(t\right),z\right)p_{t-}\\
\quad\quad-l_{r}^{\mathsf{T}}\left(t,E_{t},X_{t-},Y_{t-},A_{t-},r\left(t,z\right),u\left(t\right)\right)\Big]\tilde{N}\left(dE_{t},dz\right),\\
-dq_{t}=\Big[f_{x}^{\mathsf{T}}\left(t,E_{t},X_{t},Y_{t},A_{t},r\left(t,z\right),u\left(t\right)\right)q_{t}\\
\quad\quad-\int_{|z|<c}g_{x}^{\mathsf{T}}\left(t,E_{t},X_{t},Y_{t},A_{t},r\left(t,z\right),u\left(t\right),z\right)p_{t}\Pi\left(dz\right)\\
\quad\quad+\sigma_{x}^{\mathsf{T}}\left(t,E_{t},X_{t},Y_{t},A_{t},r\left(t,z\right),u\left(t\right)\right)k_{t}\\
\quad\quad+\int_{|z|<c}b_{x}^{\mathsf{T}}\left(t,E_{t},X_{t},Y_{t},A_{t},r\left(t,z\right),u\left(t\right),z\right)R\left(t,z\right)\\
\quad\quad+l_{x}^{\mathsf{T}}\left(t,E_{t},X_{t},Y_{t},A_{t},r\left(t,z\right),u\left(t\right)\right)\Pi\left(dz\right)\Big]dE_{t}\\
\quad\quad-k_{t}dB_{E_{t}}-\int_{|z|<c}R\left(t,z\right)\tilde{N}\left(dE_{t},dz\right),\\
p_{0}=-\gamma_{y}(y_{0}),\\
q_{T}=-\phi_{x}^{\mathsf{T}}\left(X_{T}\right)p_{T}+h_{x}\left(X_{T}\right).
\end{cases}   \label{adjoint state equation}
\end{align}

Similarly, under \autoref{assumption}, there exists a unique solution $\left(p_{\cdot},q_{\cdot},k_{\cdot},R\left(\cdot,\cdot\right)\right)\in L_{\mathcal{F}}^{2}\left([0,T];\mathbb{R}^{m}\right)\times L_{\mathcal{F}}^{2}\left([0,T];\mathbb{R}^{n}\right)\times L_{\mathcal{F,p}}^{2}\left([0,T];\mathbb{R}^{n\times d}\right)\times F_{p}^{2}\left([0,T];\mathbb{R}^{n}\right)$ satisfying \eqref{adjoint state equation}.

We define the Hamiltonian function $H:[0,T]\times\mathbb{R}_{+}\times\mathbb{R}^{n}\times\mathbb{R}^{m}\times\mathbb{R}^{m\times d}\times\mathbb{R}^{m}\times\mathcal{U}\times\mathbb{R}^{m}\times\mathbb{R}^{n}\times\mathbb{R}^{n\times d}\times\mathbb{R}^{n}\rightarrow\mathbb{R} $
as follows:
\begin{align}
  &\quad \; H\left(t_{1},t_{2},x,y,a,r(\cdot),v,p,q,k,R(\cdot)\right) \notag \\
  &:=\left\langle q,f\left(t_{1},t_{2},x,v\right)\right\rangle +\left\langle k,\sigma\left(t_{1},t_{2},x,v\right)\right\rangle   \notag \\
	&\quad-\int_{|z|<c}\Big[\left\langle p,g\left(t_{1},t_{2},x,y,a,r(z),v\right)\right\rangle -l\left(t_{1},t_{2},x,y,a,r(z),v\right) \notag \\
	&\quad-\left\langle R(z),b\left(t_{1},t_{2},x,v,z\right)\right\rangle \Big]\Pi(dz). \label{Hamiltonian}
\end{align}
or 
\begin{align}
  &\quad \; H\left(t,E_{t},X_{t},Y_{t},A_{t},r(t,z),v(t),p_{t},q_{t},k_{t},R(t,z)\right) \notag \\
  &:=\left\langle q_{t},f\left(t,E_{t},X_{t},v(t)\right)\right\rangle +\left\langle k_{t},\sigma\left(t,E_{t},X_{t},v(t)\right)\right\rangle    \notag \\
	&\quad-\int_{|z|<c}\Big[\left\langle p_{t},g\left(t,E_{t},X_{t},Y_{t},A_{t},r(t,z),v(t)\right)\right\rangle -l\left(t,E_{t},X_{t},Y_{t},A_{t},r(t,z),v(t)\right) \notag \\
	&\quad-\left\langle R(t,z),b\left(t,E_{t},X_{t},v(t),z\right)\right\rangle \Big]\Pi(dz). \label{Hamiltonian2}
\end{align}

Then, the adjoint equations \eqref{adjoint state equation} can be rewritten in the Hamiltonian's form:
\begin{align}
  \begin{cases}
dp_{t}=H_{y}\left(t,E_{t},X_{t},Y_{t},A_{t},r\left(t,\cdot\right),u\left(t\right),p_{t},q_{t},k_{t},R\left(t,\cdot\right)\right)dE_{t}\\
\quad\quad\quad+H_{z}\left(t,E_{t},X_{t},Y_{t},A_{t},r\left(t,\cdot\right),u\left(t\right),p_{t},q_{t},k_{t},R\left(t,\cdot\right)\right)dB_{E_{t}}\\
\quad\quad\quad+\int_{|z|<c}H_{r}\left(t,E_{t},X_{t},Y_{t},A_{t},r\left(t,\cdot\right),u\left(t\right),p_{t},q_{t},k_{t},R\left(t,\cdot\right)\right)\tilde{N}\left(dE_{t},dz\right),\\
-dq_{t}=H_{x}\left(t,E_{t},X_{t},Y_{t},A_{t},r\left(t,\cdot\right),u\left(t\right),p_{t},q_{t},k_{t},R\left(t,\cdot\right)\right)dE_{t}\\
\quad-k_{t}dB_{E_{t}}-\int_{|z|<c}R\left(t,z\right)\tilde{N}\left(dE_{t},dz\right),\\
p_{0}=-\gamma_{y}(y_{0}),\\
q_{T}=-\phi_{x}^{\mathrm{T}}\left(X_{T}\right)p_{T}+h_{x}\left(X_{T}\right).
\end{cases}  \label{adjoint state equation-Hamiltonian's form}
\end{align}

\subsection{Main results}
\begin{Theorem}(Necessary condition)\label{thm: Necessary condition}
  Let \autoref{assumption} hold.
  Let $u(\cdot)$ be an optimal control and $\left(X_{\cdot},Y_{\cdot},A_{\cdot},r(\cdot,\cdot)\right)$ be the corresponding trajectory. Then we have
  \begin{equation}
    \left(H_{v}\left(t,E_{t},X_{t},Y_{t},A_{t},r\left(t,\cdot\right),u\left(t\right),p_{t},q_{t},k_{t},R\left(t,\cdot\right)\right),v-u\left(t\right)\right)\geq0,\forall v\in\mathcal{U},a.e.,\mathbb{P}-a.s., \label{maximum condition}
  \end{equation}
  where Hamiltonian H is defined in \eqref{Hamiltonian}.
\end{Theorem}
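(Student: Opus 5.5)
The plan is the standard duality argument of the convex variational method, run in the time-changed setting. I take Lemma \ref{Lemma 2} as the starting point. The variational inequality there contains the first-order terms $X^{1,\rho}$, $Y^{1,\rho}$, $A^{1,\rho}$, $r^{1,\rho}$, and the goal is to eliminate them using the adjoint processes $(p,q,k,R)$ of \eqref{adjoint state equation}. To do this, I apply the Itô formula for time-changed Lévy noise (Lemma \ref{Ito's formula}, in its product-rule form) to two pairings on $[0,T]$: $\langle q_t, X^{1,\rho}_t\rangle$ and $\langle p_t, Y^{1,\rho}_t\rangle$. All differentials are driven by $dE_t$, $dB_{E_t}$ and $\tilde N(dE_t,dz)$. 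The quadratic covariations therefore contribute only $dE_t$-terms: $\langle k_t,\sigma_x X^{1,\rho}+\sigma_v v\rangle dE_t$ from the Brownian parts, and $\int\langle R,b_xX^{1,\rho}+b_v v\rangle\Pi(dz)dE_t$ from the jump parts. The same holds for the $(p,Y^{1,\rho})$ pair, where the $dB_{E}$- and $\tilde N$-coefficients of $p$ are paired with $A^{1,\rho}$ and $r^{1,\rho}$.

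After taking expectations, the local martingale parts vanish. To justify this, I will use the square-integrability of all processes in the indicated spaces together with the change-of-variable formula (Lemma \ref{change-of-variable formula}), which turns integrals against $dB_{E_t}$ and $\tilde N(dE_t,dz)$ into genuine Brownian and Poisson integrals up to the stopping time $E_T$. This gives two identities:
\begin{align*}
\mathbb{E}\langle q_T,X^{1,\rho}_T\rangle &= \mathbb{E}\int_0^T(\cdots)\,dE_t,\\
\mathbb{E}\langle p_T,Y^{1,\rho}_T\rangle-\mathbb{E}\langle p_0,Y^{1,\rho}_0\rangle &= \mathbb{E}\int_0^T(\cdots)\,dE_t.
\end{align*}

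Next I substitute the boundary conditions. These are $q_T=-\phi_x^{\mathsf T}(X_T)p_T+h_x(X_T)$, $Y^{1,\rho}_T=\phi_x(X_T)X^{1,\rho}_T$ and $p_0=-\gamma_y(Y_0)$, with $X^{1,\rho}_0=0$. Adding the two identities, the term $\langle p_T,\phi_xX^{1,\rho}_T\rangle$ cancels, and the left side becomes exactly $\mathbb{E}[h_x(X_T)X^{1,\rho}_T]+\mathbb{E}[\gamma_y(Y_0)Y^{1,\rho}_0]$. On the right side, the adjoint drifts were designed so that several groups of terms cancel:
\begin{itemize}
\item the $g_x^{\mathsf T}p$ term in the $q$-equation cancels the $X^{1,\rho}$-part of $p\cdot g_x X^{1,\rho}$;
\item $g_y^{\mathsf T}p$ in $dp$ cancels against $g_yY^{1,\rho}$;
\item $g_a^{\mathsf T}p$ and $g_r^{\mathsf T}p$ (through the covariations) cancel $g_aA^{1,\rho}$ and $g_rr^{1,\rho}$;
\item the $f_x,\sigma_x,b_x$ terms cancel between the $q$-drift and the $X^{1,\rho}$-dynamics.
\end{itemize}
The $l_x,l_y,l_a,l_r$ terms reproduce the corresponding terms in Lemma \ref{Lemma 2}. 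What remains is only the $v$-dependent part:
\[
\mathbb{E}\int_0^T\Big[\langle q_t,f_v v\rangle+\langle k_t,\sigma_vv\rangle+\int_{|z|<c}\big(\langle R,b_vv\rangle-\langle p_t,g_vv\rangle\big)\Pi(dz)\Big]dE_t .
\]
The variational inequality then collapses to
\[
o(\rho)\le\mathbb{E}\int_0^T\langle H_v(t,E_t,X_t,\ldots,u(t),p_t,q_t,k_t,R(t,\cdot)),v(t)\rangle\,dE_t .
\]
I let $\rho\to0$; since $X^{1,\rho},\ldots$ do not depend on $\rho$ in their law (they are linear in $v$), this yields $\mathbb{E}\int_0^T\langle H_v,v(t)\rangle dE_t\ge0$ for every admissible direction. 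Here $v(t)$ is any $w(t)-u(t)$ with $w\in\mathcal{U}_{ad}$.

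To localize, I fix $w\in\mathcal U$ and a $\mathcal{G}_t$-progressive set $\Theta\subset[0,T]\times\Omega$. I take the direction $v(t)=(w-u(t))\mathbf 1_\Theta$, which is admissible by convexity. This gives $\mathbb{E}\int_0^T\mathbf 1_\Theta\langle H_v,w-u(t)\rangle dE_t\ge0$ for every such $\Theta$, so $\langle H_v,w-u(t)\rangle\ge0$ for $dE_t\otimes d\mathbb P$-a.e. $(t,\omega)$. Running $w$ over a countable dense subset of $\mathcal U$ and using continuity of $H_v$ in $w$ then gives \eqref{maximum condition}. A subtlety is that "a.e." is naturally with respect to the random measure $dE_t$. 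On the trapping intervals where $E$ is flat the inequality is vacuous, so I would state it $dE_t\,d\mathbb P$-a.e., or transfer it through Theorem \ref{duality of general state equations} to the dual time scale.

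I expect the main obstacle to be the Itô/duality computation itself, specifically the bookkeeping of the jump covariation terms. The $\tilde N(dE_t,dz)$ integrand of $p$ must pair with $r^{1,\rho}$ under $\Pi(dz)dE_t$ so that it exactly matches $g_r$, and I must keep the $\Pi(dz)$-integrals consistent with the convention that $g$ and $l$ enter integrated against $\Pi(dz)$. Justifying that the stochastic integrals are true martingales is the second delicate point. I would handle it by passing to the dual system via Lemma \ref{change-of-variable formula}, where the usual BDG estimates apply conditionally on $D$ by independence.
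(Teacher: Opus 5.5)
Your proposal is correct and follows the paper's proof. Like the paper, you apply the time-changed Itô formula to $\langle X^{1,\rho}_t,q_t\rangle+\langle Y^{1,\rho}_t,p_t\rangle$, use the boundary conditions to reduce the left side to $\mathbb{E}[h_x(X_T)X^{1,\rho}_T]+\mathbb{E}[\gamma_y(Y_0)Y^{1,\rho}_0]$, and combine this with Lemma \ref{Lemma 2} to obtain $\mathbb{E}\int_0^T\langle H_v,v(t)\rangle\,dE_t\ge 0$. The paper stops at that integrated inequality. So your localization step and your remark that the conclusion can only hold $dE_t\otimes d\mathbb{P}$-a.e.\ (since $dE_t$ is singular with respect to $dt$ for an $\alpha$-stable subordinator) are genuine additions, not deviations; in the localization, take $\Theta$ predictable rather than progressive so that $(w-u)\mathbf{1}_\Theta$ stays in $\mathcal{U}_{ad}$, which loses nothing $dE_t\otimes d\mathbb{P}$-a.e.\ because $E$ is continuous.
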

\begin{proof}
  Applying Itô's formula (\autoref{Ito's formula}) to $\langle X_{t}^{1,\rho},q_{t}\rangle+\langle Y_{t}^{1,\rho},p_{t}\rangle $, we obtain
  \begin{align}
    &\quad \;\mathbb{E}\Big[h_{x}(X_{T})X_{T}^{1,\rho}\Big]+\mathbb{E}\Big[\gamma_{y}(Y_{0})Y_{0}^{1,\rho}\Big]	 \notag \\
    &=\mathbb{E}\int_{0}^{T}\int_{|z|<c}\Big[-l_{x}\left(t,E_{t},X_{t},Y_{t},A_{t},u(t),r(t,\cdot)\right)X_{t}^{1,\rho}-l_{y}\left(t,E_{t},X_{t},Y_{t},A_{t},u(t),r(t,\cdot)\right)Y_{t}^{1,\rho} \notag \\
	  &\quad-l_{a}\left(t,E_{t},X_{t},Y_{t},A_{t},u(t),r(t,\cdot)\right)A_{t}^{1,\rho}-l_{r}\left(t,E_{t},X_{t},Y_{t},A_{t},u(t),r(t,\cdot)\right)r^{1,\rho}(t,z) \notag \\
	  &\quad-l_{v}\left(t,E_{t},X_{t},Y_{t},A_{t},u(t),r(t,\cdot)\right)v(t)\Big]\Pi(dz)dE_{t} \notag \\
	  &\quad+\mathbb{E}\int_{0}^{T}\langle H_{v}\left(t,E_{t},X_{t},Y_{t},A_{t},u\left(t\right),p_{t},k_{t},q_{t},r(\cdot,t)\right),v\left(t\right)\rangle dE_{t}. \notag 
  \end{align}
  Then applying the variational inequality (\autoref{Lemma 2}), for $v(\cdot)$ such that $u(\cdot)+v(\cdot)\in\mathcal{U}_{ad}$,
  \begin{equation}
    \mathbb{E}\int_{0}^{T}\langle H_{v}\left(t,E_{t},X_{t},Y_{t},A_{t},r(\cdot,t),u\left(t\right),p_{t},k_{t},q_{t},R(\cdot,t)\right),v\left(t\right)\rangle dE_{t}\geq0.  \notag 
  \end{equation}
\end{proof}

\begin{Assumption}\label{assumption2}
  $h$ is convex in $x$, $\gamma$ is convex in $y$.
\end{Assumption}

\begin{Theorem}(Sufficient condition)
  Let \autoref{assumption} and \autoref{assumption2} hold.
  Let $u(\cdot)$ be an admissible control and $\left(X_{\cdot},Y_{\cdot},A_{\cdot},r(\cdot,\cdot)\right)$ be the corresponding trajectory with $Y_{T}=M_{T}X_{T},M_{T}\in\mathbb{R}^{m\times n}$.
  Let $(p_{\cdot},q_{\cdot},k_{\cdot},R(\cdot,\cdot))$ be the solution of the adjoint equations \eqref{adjoint state equation-Hamiltonian's form}.

  Suppose that $H$ is convex in $\left(x,y,a,r(\cdot),v\right)$.
  Then $u(\cdot)$ is an optimal control if it satisfies \eqref{maximum condition}.
\end{Theorem}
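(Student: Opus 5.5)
Take any admissible $v(\cdot)\in\mathcal{U}_{ad}$ with trajectory $(X^v,Y^v,A^v,r^v)$. We show $J(v(\cdot))-J(u(\cdot))\geq0$ by splitting the difference into three parts:
\begin{align}
J(v(\cdot))-J(u(\cdot))&=\mathbb{E}\big[h(X_T^v)-h(X_T)\big]+\mathbb{E}\big[\gamma(Y_0^v)-\gamma(Y_0)\big] \notag\\
&\quad+\mathbb{E}\int_0^T\!\!\int_{|z|<c}\big[l^v-l\big]\Pi(dz)dE_t. \notag
\end{align}
By Assumption \ref{assumption2}, convexity gives $h(X_T^v)-h(X_T)\geq \langle h_x(X_T),X_T^v-X_T\rangle$ and $\gamma(Y_0^v)-\gamma(Y_0)\geq\langle\gamma_y(Y_0),Y_0^v-Y_0\rangle$. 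By the boundary conditions of \eqref{adjoint state equation-Hamiltonian's form}, $\gamma_y(Y_0)=-p_0$ and $h_x(X_T)=q_T+\phi_x^{\mathsf T}(X_T)p_T$. Under the structural hypothesis $Y_T=M_TX_T$ (so $\phi_x=M_T$ and $Y_T^v-Y_T=M_T(X_T^v-X_T)$), the terminal/initial terms reduce to
\[
\mathbb{E}\langle q_T,X_T^v-X_T\rangle+\mathbb{E}\langle p_T,Y_T^v-Y_T\rangle-\mathbb{E}\langle p_0,Y_0^v-Y_0\rangle .
\]
This linearity is exactly why the theorem assumes $Y_T=M_TX_T$: it avoids needing convexity of $\phi$ together with a sign condition on $p_T$.

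Next, apply the Itô formula for time-changed Lévy noise (Lemma \ref{Ito's formula}, with only $dE_t$, $dB_{E_t}$, $\tilde N(dE_t,dz)$ terms present) to $\langle q_t,X_t^v-X_t\rangle+\langle p_t,Y_t^v-Y_t\rangle$ on $[0,T]$, and take expectations. The local martingale parts drop out, after a localization justified by the square-integrability of all processes in the spaces given by the existence result and Assumption \ref{assumption}. This expresses the quantity above as $\mathbb{E}\int_0^T(\cdots)dE_t$. The integrand contains:
\begin{itemize}[label={}]
\item $\langle q,f^v-f\rangle+\langle k,\sigma^v-\sigma\rangle+\int\langle R,b^v-b\rangle\Pi(dz)-\int\langle p,g^v-g\rangle\Pi(dz)$;
\item $-\langle H_x,X^v-X\rangle$ from the $q$-dynamics;
\item $\langle H_y,Y^v-Y\rangle$, together with the cross-variation terms $\langle H_a,A^v-A\rangle$ and $\int\langle H_r,r^v-r\rangle\Pi(dz)$, from the $p$-dynamics.
\end{itemize}
Adding the $l$-difference and using the definition \eqref{Hamiltonian2} of $H$, one obtains
\begin{align}
J(v)-J(u)&\geq\mathbb{E}\int_0^T\Big[H(t,\Theta^v_t,v(t))-H(t,\Theta_t,u(t))-\langle H_x,X^v_t-X_t\rangle-\langle H_y,Y^v_t-Y_t\rangle \notag\\
&\quad-\langle H_a,A^v_t-A_t\rangle-\int_{|z|<c}\langle H_r,r^v-r\rangle\Pi(dz)\Big]dE_t, \notag
\end{align}
where $\Theta$ denotes the state arguments and the derivatives are evaluated along $(u,p,q,k,R)$. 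The sign conventions of \eqref{adjoint state equation} (the $-l$ and $-g$ entries, $p_0=-\gamma_y$) must be tracked so that every term matches the Hamiltonian. I expect this bookkeeping to be the main obstacle, particularly matching the $dB_{E_t}$ and $\tilde N$ cross-variations $\langle H_a,A^v-A\rangle$ and $\int\langle H_r,r^v-r\rangle$, which enter through the quadratic covariation between the $p$-equation and $Y^v-Y$.

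Finally, convexity of $H$ in $(x,y,a,r(\cdot),v)$ gives
\[
H(\Theta^v,v)-H(\Theta,u)\geq\langle H_x,\Delta X\rangle+\langle H_y,\Delta Y\rangle+\langle H_a,\Delta A\rangle+\int\langle H_r,\Delta r\rangle\Pi(dz)+\langle H_v,v-u\rangle .
\]
The integrand is therefore bounded below by $\langle H_v(t,\dots,u(t),\dots),v(t)-u(t)\rangle$, which is nonnegative $dE_t\otimes d\mathbb P$-a.e. by \eqref{maximum condition}. Since $E$ is nondecreasing, the $dE_t$-integral is nonnegative, so $J(v(\cdot))\geq J(u(\cdot))$. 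As $v$ was arbitrary, $u$ is optimal.
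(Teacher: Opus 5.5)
Your proof follows the same route as the paper; the paper bounds $J(u)-J(v)$ from above, while you bound $J(v)-J(u)$ from below. The ingredients are the same: convexity of $h$ and $\gamma$; the boundary conditions $p_0=-\gamma_y(Y_0)$ and $q_T=h_x(X_T)-M_T^\top p_T$ with $\phi_x=M_T$; It\^o's formula for the pairing of $(q,p)$ with $(X^v-X,\,Y^v-Y)$; the identity that turns the $f,\sigma,b,g,l$ differences into a difference of Hamiltonians; convexity of $H$; and the maximum condition.

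The one step that is wrong as written is the sign bookkeeping you flagged. Write $\Delta Y=Y^v-Y$, and similarly for the other variables. Your third bullet has $+\langle H_y,\Delta Y\rangle$, $+\langle H_a,\Delta A\rangle$ and $+\int\langle H_r,\Delta r\rangle\Pi(dz)$. This is inconsistent with your final display, and it is the display that the argument needs.

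With $H$ as in \eqref{Hamiltonian}, one has $H_y=-\int(g_y^\top p-l_y)\Pi(dz)$, $H_a=-\int(g_a^\top p-l_a)\Pi(dz)$ and $H_r=-(g_r^\top p-l_r)$. So the explicit system \eqref{adjoint state equation} reads $dp_t=-H_y\,dE_t-H_a\,dB_{E_t}-\int H_r\,\tilde N(dE_t,dz)$. The $dE_t$-part of $d\langle p_t,\Delta Y_t\rangle$ (drift plus compensated cross-variations) is then $-\langle H_y,\Delta Y\rangle-\langle H_a,\Delta A\rangle-\int\langle H_r,\Delta r\rangle\Pi(dz)-\int\langle p,\Delta g\rangle\Pi(dz)$. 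That is exactly what your display requires.

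Your bullet is instead what the printed Hamiltonian form \eqref{adjoint state equation-Hamiltonian's form} produces, with $dp_t=H_y\,dE_t+\cdots$; this is the form cited in the statement. With those signs the argument does not close. After applying convexity of $H$ you are left with $2\langle H_y,\Delta Y\rangle+2\langle H_a,\Delta A\rangle+2\int\langle H_r,\Delta r\rangle\Pi(dz)+\langle H_v,v-u\rangle$, and the first three terms have no sign. The paper's proof silently works with \eqref{adjoint state equation}: its computation of $\mathbb{E}[\gamma(Y_0)-\gamma(Y_0^v)]$ is written in terms of $g_y^\top p-l_y$, $g_a^\top p-l_a$ and $g_r^\top p-l_r$. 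You should commit to that system explicitly instead of leaving the signs to be sorted out.

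Your reading of ``a.e.'' in \eqref{maximum condition} as $dE_t\otimes d\mathbb P$-a.e.\ is the one the final step needs. Lebesgue-a.e.\ in $t$ would not suffice, because for a driftless stable subordinator the measure $dE_t$ is a.s.\ carried by the Lebesgue-null closed range of $D$. The paper uses the condition in the same way, implicitly.
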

\begin{proof}
  Let $v(\cdot)$ be an arbitrary admissible control and $\left(X_{t}^{v},Y_{t}^{v},A_{t}^{v},r\left(t,z\right)\right)$ be the corresponding trajectory. 
  We consider
  \begin{align}
    J(u(\cdot))-J(v(\cdot))
    &=\mathbb{E}\int_{0}^{T}\int_{|z|<c}\left[l\left(t,E_{t},X_{t},Y_{t},A_{t},r\left(t,z\right),u(t)\right)-l\left(t,E_{t},X_{t}^{v},Y_{t}^{v},A_{t}^{v},r^{v}\left(t,z\right),v(t)\right)\right]\Pi\left(dz\right)dE_{t} \notag \\
	  &+\mathbb{E}\left[h(X_{T})-h(X_{T}^{v})\right]+\mathbb{E}\left[\gamma(Y_{0})-\gamma(Y_{0}^{v})\right] \notag \\
	  &=I_{1}+I_{2}, \notag
  \end{align} 
  where
  \begin{equation}
    I_{1}=\mathbb{E}\int_{0}^{T}\int_{|z|<c}\left[l\left(t,E_{t},X_{t},Y_{t},A_{t},r\left(t,z\right),u(t)\right)-l\left(t,E_{t},X_{t}^{v},Y_{t}^{v},A_{t}^{v},r^{v}\left(t,z\right),v(t)\right)\right]\Pi\left(dz\right)dE_{t} \notag
  \end{equation}
  and 
  \begin{equation}
    I_{2}=\mathbb{E}\left[h(X_{T})-h(X_{T}^{v})\right]+\mathbb{E}\left[\gamma(Y_{0})-\gamma(Y_{0}^{v})\right]. \notag
  \end{equation}
  For $I_1$, by definition of Hamiltonian, we have
  \begin{align}
    I_{1}
    &=\mathbb{E}\int_{0}^{T}\int_{|z|<c}\left[l\left(t,E_{t},X_{t},Y_{t},A_{t},r\left(t,z\right),u(t)\right)-l\left(t,E_{t},X_{t}^{v},Y_{t}^{v},A_{t}^{v},r\left(t,z\right),v(t)\right)\right]\Pi\left(dz\right)dE_{t}  \notag \\
	  &=\mathbb{E}\int_{0}^{T}\Big[H\left(t,E_{t},X_{t},Y_{t},A_{t},r(t,\cdot),u(t),p_{t},q_{t},k_{t},R(t,\cdot)\right) \notag \\
    &\quad-H\left(t,E_{t},X_{t}^{v},Y_{t}^{v},A_{t}^{v},r^{v}(t,\cdot),v(t),p_{t},q_{t},k_{t},R(t,\cdot)\right)\Big]dE_{t} \notag \\
    &\quad+\mathbb{E}\int_{0}^{T}\int_{|z|<c}\Big[-\langle q_{t},f\left(t,E_{t},X_{t},u(t)\right)-f\left(t,E_{t},X_{t}^{v},v(t)\right)\rangle \notag \\
    &\quad-\langle k_{t},\sigma\left(t,E_{t},X_{t},u(t)\right)-\sigma\left(t,E_{t},X_{t}^{v},v(t)\right)\rangle \notag \\
    &\quad-\langle R\left(t,z\right),b\left(t,E_{t},X_{t},u(t),z\right)-b\left(t,E_{t},X_{t}^{v},v(t),z\right)\rangle\Big]\Pi(dz)dE_{t} \notag 
  \end{align}
For $I_2$, by \autoref{assumption2} and Itô's formula, we have
\begin{align}
  &\quad \; \mathbb{E}\left[h\left(X_{T}\right)-h\left(X_{T}^{v}\right)\right]	 \notag \\ 
  &\leq\mathbb{E}\left[\left(X_{T}-X_{T}^{v}\right)^{\top}h_{x}\left(X_{T}\right)\right] \notag \\
	&=\mathbb{E}\left[\left(X_{T}-X_{T}^{v}\right)^{\top}q_{T}\right]+\mathbb{E}\left[\left(X_{T}-X_{T}^{v}\right)^{\top}M_{T}^{\top}p_{T}\right] \notag \\
	&=\mathbb{E}\int_{0}^{T}\Big\{\left(X_{T}-X_{T}^{v}\right)^{\top}\Big(-f_{x}^{\top}\left(t,E_{t},X_{t},u(t)\right)q_{t} \notag \\
  &\quad+\int_{|z|<c}g_{a}^{\top}\left(t,E_{t},X_{t},u(t),z\right)p_{t}\Pi(dz)-\sigma_{x}^{\top}\left(t,E_{t},X_{t},u(t)\right)k_{t}\notag \\
  &\quad-\int_{|z|<c}b_{x}^{\top}\left(t,E_{t},X_{t},u(t),z\right)R\left(t,z\right)\Pi(dz)-l_{x}^{\top}\left(t,E_{t},X_{t},u(t)\right)\Pi(dz) \notag \\
  &\quad+\langle q_{t},f\left(t,E_{t},X_{t},u(t)\right)-f\left(t,E_{t},X_{t}^{v},v(t)\right)\rangle+\langle k_{t},\sigma\left(t,E_{t},X_{t},u(t)\right)-\sigma\left(t,E_{t},X_{t}^{v},v(t)\right)\rangle \notag \\
  &\quad+\int_{|z|<c}R\left(E_{t},z\right)\left(b\left(t,E_{t},X_{t},u(t),z\right)-b\left(t,E_{t},X_{t}^{v},v(t),z\right)\right)\Pi(dz)\Big)\Big\} dE_{t} \notag \\
  &\quad+\mathbb{E}\left[\left(X_{T}-X_{T}^{v}\right)^{\top}M_{T}^{\top}p_{T}\right]. \notag 
\end{align}
And similarly, by \autoref{assumption2} and Itô's formula, we obtain
\begin{align}
  &\quad \;\mathbb{E}\left[\gamma\left(y_{0}\right)-\gamma\left(y_{0}^{v}\right)\right]  \notag \\
  &\leq\mathbb{E}\left[\left(\gamma\left(y_{0}\right)-\gamma\left(y_{0}^{v}\right)\right)^{\top}\gamma_{y}\left(y_{0}\right)\right] \notag \\
  &=-\mathbb{E}\left[\left(Y_{0}-Y_{0}^{v}\right)^{\top}p_{0}\right] \notag \\
  &=-\mathbb{E}\left[\left(X_{T}-X_{T}^{v}\right)^{\top}M_{T}^{\top}p_{T}\right] \notag \\
  &\quad+\mathbb{E}\int_{0}^{T}\int_{|z|<c}\Big[\left(Y_{t}-Y_{t}^{v}\right)^{\top}\left(g_{y}^{\top}\left(t,E_{t},X_{t},Y_{t},A_{t},r(t,\cdot),u(t),z\right)p_{t}-l_{y}^{\top}\left(t,E_{t},X_{t},Y_{t},A_{t},r(t,\cdot),u(t)\right)\right) \notag \\
  &\quad+\left(A_{t}-A_{t}^{v}\right)^{\top}\left(g_{a}^{\top}\left(t,E_{t},X_{t},Y_{t},A_{t},r(t,\cdot),u(t),z\right)p_{t}-l_{a}^{\top}\left(t,E_{t},X_{t},Y_{t},A_{t},r(t,\cdot),u(t)\right)\right) \notag \\
  &\quad+\left(r\left(t,z\right)-r^{v}\left(t,z\right)\right)^{\top}\left(g_{r}^{\top}\left(t,E_{t},X_{t},Y_{t},A_{t},r(t,\cdot),u(t),z\right)p_{t}-l_{r}^{\top}\left(t,E_{t},X_{t},Y_{t},A_{t},r(t,\cdot),u(t)\right)\right) \notag \\
  &\quad-\langle p_{t},g\left(t,E_{t},X_{t},Y_{t},A_{t},r(t,\cdot),u(t),z\right)-g\left(t,E_{t},X_{t}^{v},Y_{t}^{v},A_{t}^{v},r^{v}(t,\cdot),v(t)\right)\rangle\Big]\Pi(dz)dE_{t}. \notag 
\end{align}
Adding $I_1$ and $I_2$, we have
\begin{align}
  J(u(\cdot))-J(v(\cdot)) & \leq\mathbb{E}\int_{0}^{T}\Big[H\left(t,E_{t},X_{t},Y_{t},A_{t},r(t,\cdot),u(t),p_{t},k_{t},q_{t},R(t,\cdot)\right) \notag \\
  &\quad -H\left(t,E_{t},X_{t}^{v},Y_{t}^{v},A_{t}^{v},r^{v}(t,\cdot),v(t),p_{t},k_{t},q_{t},R(t,\cdot)\right)\notag \\
 & \quad-\langle H_{x}\left(t,E_{t},X_{t},Y_{t},A_{t},r(t,\cdot),u(t),p_{t},k_{t},q_{t},R(t,\cdot)\right),X_{t}-X_{t}^{v}\rangle\notag \\
 & \quad-\langle H_{y}\left(t,E_{t},X_{t},Y_{t},A_{t},r(t,\cdot),u(t),p_{t},k_{t},q_{t},R(t,\cdot)\right),Y_{t}-Y_{t}^{v}\rangle\notag \\
 & \quad-\langle H_{a}\left(t,E_{t},X_{t},Y_{t},A_{t},r(t,\cdot),u(t),p_{t},k_{t},q_{t},R(t,\cdot)\right),A_{t}-A_{t}^{v}\rangle\notag \\
 & \quad-\left\langle H_{r}\left(t,E_{t},X_{t},Y_{t},A_{t},r(t,\cdot),u(t),p_{t},k_{t},q_{t},R(t,\cdot)\right),r(t,\cdot)-r^{v}(t,\cdot)\right\rangle \Big]dE_{t}. \label{4.12}
\end{align}
Since by assumption $H$ is convex in $(x,y,a,r(\cdot),v)$, then the first two terms of above turns into
\begin{align}
  &\quad \; H\left(t,E_{t},X_{t},Y_{t},A_{t},r(t,\cdot),u(t),p_{t},k_{t},q_{t},R(t,\cdot)\right)-H\left(t,E_{t},X_{t}^{v},Y_{t}^{v},A_{t}^{v},r^{v}(t,\cdot),v(t),p_{t},k_{t},q_{t},R(t,\cdot)\right) \notag \\
 & \leq\langle H_{x}\left(t,E_{t},X_{t},Y_{t},A_{t},r(t,\cdot),u(t),p_{t},k_{t},q_{t},R(t,\cdot)\right),X_{t}-X_{t}^{v}\rangle\notag \\
 & \quad+\langle H_{y}\left(t,E_{t},X_{t},Y_{t},A_{t},r(t,\cdot),u(t),p_{t},k_{t},q_{t},R(t,\cdot)\right),Y_{t}-Y_{t}^{v}\rangle\notag \\
 & \quad+\langle H_{a}\left(t,E_{t},X_{t},Y_{t},A_{t},r(t,\cdot),u(t),p_{t},k_{t},q_{t},R(t,\cdot)\right),A_{t}-A_{t}^{v}\rangle\notag \\
 & \quad+\langle H_{r}\left(t,E_{t},X_{t},Y_{t},A_{t},r(t,\cdot),u(t),p_{t},k_{t},q_{t},R(t,\cdot)\right),r(t,\cdot)-r^{v}(t,\cdot)\rangle\notag \\
 & \quad+\langle H_{v}\left(t,E_{t},X_{t},Y_{t},A_{t},r(t,\cdot),u(t),p_{t},k_{t},q_{t},R(t,\cdot)\right),u(t)-v(t)\rangle.  \label{4.13}
\end{align}
Subsituting \eqref{4.13} into \eqref{4.12} gives
\begin{equation}
  J(u(\cdot))-J(v(\cdot))\leq\mathbb{E}\int_{0}^{T}\langle H_{v}\left(t,E_{t},X_{t},Y_{t},A_{t},r(t,\cdot),u(t),p_{t},k_{t},q_{t},R(t,\cdot)\right),u(t)-v(t)\rangle dE_{t}.
\end{equation}
Then, from \eqref{maximum condition}, we have $J(u(\cdot))\leq J(v(\cdot))$ for all $v(\cdot)\in\mathcal{U}$. Thus $u(\cdot)$ is optimal.
\end{proof}

\section{Application to a cash management problem}
We study a type of cash management problem for application.
Consider a company operating on a stochastic business time scale. The dynamics of its cash flow and the utility derived by management from a control strategy are described by the following system of partially coupled, forward-backward stochastic differential equations (FBSDEs) with Lévy noise:

\begin{align}
  \begin{cases}
dX_{t}^{v}=\left(-\mu_{1}X_{t}^{v}+\beta_{1}v\left(t\right)\right)dE_{t}+\sigma_{t}v\left(t\right)dB_{E_{t}}\\
\quad\quad+\int_{|z|<c}\eta_{t}(z)v\left(t-\right)\tilde{N}\left(dz,dE_{t}\right),\\
-dY_{t}^{v}=\int_{|z|<c}\left(-\mu_{1}Y_{t}^{v}+\mu_{2}X_{t}^{v}+\beta_{2}v\left(t\right)\right)\Pi\left(dz\right)dE_{t}-A_{t}^{v}dB_{E_{t}}\\
\quad\quad-\int_{|z|<c}r^{v}\left(t,z\right)\tilde{N}\left(dz,dE_{t}\right),\\
X_{0}^{v}=x_{0},\\
Y_{T}^{v}=X_{T}^{v}. 
\end{cases}  \label{ex: cash management}
\end{align}

where
constants $x_{0}\in\mathbb{R},\mu_{1},\mu_{2},\beta_{1},\beta_{2},\sigma_{t}>0$,
$X_t^v$ is the cash flow of an agent,
$v(t)$ is a control strategy of the agent and is regarded as the rate of capital injection or withdrawal,
$Y_t^v$ is the utility from $v(\cdot)$,
$(A_t^v)^2$ is the volatility of utility.

For any $v(\cdot)\in\mathcal{U}_{ad}$, \eqref{ex: cash management} has a unique solution $\left(X_{\cdot}^v,Y^v_{\cdot},A^v_{\cdot},r^v_(\cdot,\cdot)\right)$.

Introduce a cost functional:
\begin{equation}
  J(v(\cdot)):=\mathbb{E}\left[\frac{1}{2}\int_{0}^{T}(v(t)-\kappa(t))^2dE_{t}-y_{0}^{v}\right].
\end{equation}
where $\kappa(t)$ is a deterministic and bounded function with value in $\mathbb{R}$, serving as a dynamic benchmark.
Then, the cash mangement problem with stochastic recursive utility is as follows.

\begin{Problem}\label{problem: cash management}
   Find an optimal control strategy $u(\cdot)\in\mathcal{U}_{ad}$ such that
  \begin{equation}
    J(u(\cdot))=\inf_{v(\cdot)\in\mathcal{U}_{ad}}J(v(\cdot)).
  \end{equation}
  subject to \eqref{ex: cash management}.
\end{Problem}

We can check that \autoref{assumption} is satisfied. 
Then we can use the maximum principle \autoref{thm: Necessary condition} to solve the above problem.

The Hamiltonian function and the adjoint equation are then reduced to
\begin{align}
  H\left(t_{1},t_{2},x,y,a,r(\cdot),v,p,q,k,R(\cdot)\right)
  &=\left\langle q,-\mu_{1}x+\beta_{1}v\right\rangle +\left\langle k,\sigma v\right\rangle  \notag \\
  &\quad -\left\langle p,\left(-\mu_{1}y+\mu_{2}x+\beta_{2}v\right)\right\rangle 	\notag \\
	&\quad+\int_{|z|<c}\left\langle R(z),\eta_{t}(z)v\right\rangle \Pi(dz)dE_{t} \notag \\
	&\quad-\frac{1}{2}\int_{0}^{T}(v(t)-\kappa(t))^2. \notag 
\end{align}

and 
\begin{align}
  \begin{cases}
dp_{t}=-\mu_{1}p_{t}dE_{t},\\
-dq_{t}=\left(-\mu_{1}q_{t}-\mu_{2}p_{t}\right)dE_{t}-k_{t}dB_{E_{t}}-\int_{|z|<c}R_{t}\left(t,z\right)\tilde{N}(dz,dE_{t}),\\
p_{0}=1,\\
q_{T}=-p_{T}.
\end{cases} \label{ex: cash management-adjoint equation}
\end{align}

By \autoref{assumption} and \autoref{assumption2}, \eqref{ex: cash management-adjoint equation} admits a unique solution $\left(p_{\cdot},q_{\cdot},k_{\cdot},R\left(\cdot,\cdot\right)\right)$.

By the maximum condition \eqref{maximum condition}, we have
\begin{align}
  H_{v}\left(t_{1},t_{2},x,y,a,r(\cdot),v,p,q,k,R(\cdot)\right)
  &=q\beta_{1}+k\sigma_{t}-p\beta_{2}+\int_{|z|<c}R(z)\eta_{t}(z)\Pi(dz) \notag \\
  &\quad -(v(t)-\kappa(t)). \notag
\end{align}

Letting $H_v=0$, we have the expression of optimal control strategy:
\begin{equation}
  u(t)=\kappa(t)-\left(\beta_{1}q_{t}+\sigma_{t}k_{t}-\beta_{2}p_{t}+\int_{|z|<c}R(t,z)\eta_{t}(z)\Pi(dz)\right). \label{optimal control}
\end{equation}

\begin{Proposition}
  The optimal strategy of Problem \autoref{problem: cash management} is given by \eqref{optimal control}, where $\left(p_{\cdot},q_{\cdot},k_{\cdot},R\left(\cdot,\cdot\right)\right)$ solves \eqref{ex: cash management-adjoint equation}.
\end{Proposition}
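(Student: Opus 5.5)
The plan is to show that the control $u(\cdot)$ defined by \eqref{optimal control} satisfies both the necessary condition of \autoref{thm: Necessary condition} and the hypotheses of the sufficient-condition theorem. Optimality then follows from the latter, and uniqueness of the optimum from strict convexity of the cost in $v$.

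\textbf{Step 1: the adjoint system.} I will check that \eqref{ex: cash management-adjoint equation} has a unique solution and is decoupled from the state.
\begin{itemize}
\item The $p$-equation is a linear random ODE in the clock $E_t$, with the explicit solution $p_t=e^{-\mu_1 E_t}$. This is bounded because $E_t\ge 0$.
\item Given $p$, the $q$-equation is a linear time-changed BSDE with Lévy noise, terminal value $q_T=-p_T\in L^2$, and bounded coefficients.
\item I will solve it through the duality of \autoref{duality of general state equations} and the particle-path argument of \autoref{existence and uniqueness of dual system}: fix $\omega_2$, solve the classical linear BSDE on $[0,E_T(\omega_2)]$, and then time-change back.
\item Integrability comes from \autoref{Gronwall's inequality} together with the finiteness of exponential moments of the Mittag-Leffler variable $E_T$.
\end{itemize}
This gives $(p,q,k,R)$ in the stated spaces. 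Hence $u$ in \eqref{optimal control} is predictable and square integrable, i.e.\ admissible (taking $\mathcal U=\mathbb R$).

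\textbf{Step 2: verify the maximum condition.} The Hamiltonian of this example is, up to sign conventions, a concave quadratic in $v$. Its derivative $H_v$ is affine in $v$ with slope $-1$.
\begin{itemize}
\item Since $\mathcal U=\mathbb R$, condition \eqref{maximum condition} reduces to $H_v=0$, and \eqref{optimal control} is exactly the root of this equation.
\item Thus $u$ satisfies the necessary condition. Conversely, any optimal control must satisfy $H_v=0$ $dE_t\otimes d\mathbb P$-a.e., so it coincides with \eqref{optimal control}.
\end{itemize}

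\textbf{Step 3: sufficiency.} I will apply the sufficient-condition theorem, checking its hypotheses in turn.
\begin{itemize}
\item \autoref{assumption2} holds because $h\equiv 0$ and $\gamma(y)=-y$ are both affine, hence convex.
\item The terminal relation $Y_T=M_TX_T$ holds with $M_T=1$.
\item The convexity requirement on $H$ in $(x,y,a,r,v)$ is where care is needed. Every term except the running cost is linear in $(x,y,v)$, so convexity reduces to the sign convention for $l=\tfrac12(v-\kappa)^2$. In the general definition \eqref{Hamiltonian}, $l$ enters with a plus sign, so $H$ is convex in $v$ and linear in the remaining variables.
\end{itemize}

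I will therefore work with the paper's general $H$ and not the displayed reduced form, whose sign on the quadratic term appears reversed. With this convention, \eqref{optimal control} still arises from $H_v=0$, up to the overall sign consistent with \eqref{Hamiltonian}.

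Alternatively, I can bypass the Hamiltonian entirely. Because the state system is linear in $v$, the map $v\mapsto J(v)$ is a strictly convex quadratic functional on $L^2$. Its Gâteaux derivative, computed via \autoref{Lemma 2} and the duality identity used in the proof of \autoref{thm: Necessary condition}, equals
\[
\mathbb E\int_0^T H_v\, v\, dE_t .
\]
This derivative vanishes at $u$, so $u$ is the unique minimizer.

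\textbf{Main obstacle.} I expect the main difficulty to be Step 3: reconciling the sign conventions between \eqref{Hamiltonian}, the reduced Hamiltonian, and \eqref{optimal control}, and confirming that the relevant convexity holds. I would first try the direct convexity route for $J$, since it avoids those sign issues.
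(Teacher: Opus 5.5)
Your proof is correct, and it does more than the paper does. The paper's justification is just the computation preceding the proposition: it invokes the necessary condition (\autoref{thm: Necessary condition}), reduces the Hamiltonian and adjoint system, and sets $H_v=0$. On its own this shows only that an optimal control, if one exists, must be \eqref{optimal control}. Your Step 3 supplies the missing half: it proves that this candidate is optimal, and unique $dE_t\otimes d\mathbb{P}$-a.e.

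Your sign diagnosis is also right, and you can drop the hedge. With $H$ as in \eqref{Hamiltonian}, where $l$ enters with a plus sign, evaluating along $u$ gives
\[
H_v=\beta_1q_t+\sigma_tk_t-\beta_2p_t+\int_{|z|<c}R(t,z)\eta_t(z)\Pi(dz)+\bigl(u(t)-\kappa(t)\bigr).
\]
The root of this is exactly \eqref{optimal control}. The paper's displayed $H_v$ has slope $-1$ and would instead give $\kappa(t)$ \emph{plus} that bracket. So use this convex $H$ already in Step 2. Describing $H$ there as concave with slope $-1$ contradicts the formula you then verify.

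Of your two sufficiency routes, the direct one is cleaner. The system is linear in $v$, so $(X,Y,A,r)^{u+\rho v}=(X,Y,A,r)^{u}+\rho\,(X^{1,\rho},Y^{1,\rho},A^{1,\rho},r^{1,\rho})$ holds exactly, and the remainders of \autoref{Lemma 1} vanish identically. Applying It\^o's product rule to $p_tY^{1,\rho}_t$ and $q_tX^{1,\rho}_t$ then gives the exact identity $J(u+\rho v)=J(u)+\rho\,\mathbb{E}\int_0^TH_v\,v\,dE_t+\tfrac{\rho^2}{2}\mathbb{E}\int_0^Tv(t)^2\,dE_t$. Optimality and uniqueness follow from this without relying on the paper's general sufficiency proof.

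Step 1 also simplifies. In the particle-path framework you invoke, for fixed $\omega_2$ the $q$-equation has a deterministic driver and deterministic terminal value $-e^{-\mu_1E_T}$. Hence $k=R=0$, and $q$ is an explicit bounded function of $(E_t,E_T)$, which makes admissibility of $u$ immediate.
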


\section*{Acknowledgements}  
The author would like to thank the editors and referees for their professional suggestions. This research did not receive any specific grant
from funding agencies in the public, commercial, or not-for-profit sectors.

\section*{Data Availability}  
No data was used for the research described in the article.

\bibliographystyle{elsarticle-num}  
\bibliography{main}

\end{document}